\newtheorem{theorem}{Theorem}
\newtheorem{corollary}[theorem]{Corollary}
\newtheorem{conj}[theorem]{Conjecture}
\newtheorem{lemma}[theorem]{Lemma}
\newtheorem{prop}[theorem]{Proposition}
\theoremstyle{definition}
\theoremstyle{remark}
\newtheorem{rem}{Remark}
\numberwithin{equation}{section}
\numberwithin{theorem}{section}
\numberwithin{defn}{section}
\begin{document}
\title[Modularity of Nahm Sums for the Tadpole Diagram]
{Modularity of Nahm Sums for the Tadpole Diagram}

\author{Antun Milas and Liuquan Wang}
\address{Department of Mathematics and Statistics, University at Albany (SUNY), Albany, NY 12222,
United States}
\email{amilas@math.albany.edu}
\address{School of Mathematics and Statistics, Wuhan University, Wuhan 430072, Hubei, People's Republic of China}
\email{wanglq@whu.edu.cn;mathlqwang@163.com}

\subjclass[2010]{11P84, 33D15, 33D45}

\keywords{Rogers-Ramanujan identities; sum-product identities; Nahm sums; tadpole Cartan matrix; vector-valued modular forms}


\begin{abstract}
We prove Rogers-Ramanujan type identities for the Nahm sums associated with the tadpole Cartan matrix of rank $3$. These identities reveal the modularity of these sums, and thereby we confirm a conjecture of Penn, Calinescu and the first author in this case. We show that these Nahm sums together with some shifted sums can be combined into a vector-valued modular function on the full modular group.  We also present some conjectures for a general rank.
\end{abstract}

\maketitle

\section{Introduction and Main Results}
The famous Rogers-Ramanujan identities state that
\begin{align}
\sum_{n=0}^\infty \frac{q^{n^2}}{(q;q)_n}=\frac{1}{(q,q^4;q^5)_\infty}, \label{RR1}\\
\sum_{n=0}^\infty \frac{q^{n^2+n}}{(q;q)_n}=\frac{1}{(q^2,q^3;q^5)_\infty}, \label{RR2}
\end{align}
where (here and throughout this paper) we always assume $|q|<1$ and use standard $q$-series notations:
\begin{align}
(a;q)_0:=1, \quad (a;q)_n:=\prod\limits_{k=0}^{n-1}(1-aq^k), \quad (a;q)_\infty :=\prod\limits_{k=0}^\infty (1-aq^k),  \\
(a_1,\dots,a_m;q)_n:=(a_1;q)_n\cdots (a_m;q)_n, \quad n\in \mathbb{N}\cup \{\infty\}.
\end{align}
When the base $q$ is clear from the context,  occasionally we omit it and simply write $(a;q)_n$ as $(a)_n$ ($n\in \mathbb{N}\cup \{\infty\})$.

The Rogers-Ramanujan identities first appeared in the 1894 paper of Rogers \cite{Rogers1894} and were later rediscovered by Ramanujan before 1913. Besides \eqref{RR1} and \eqref{RR2}, Rogers \cite[pp.\ 330-332]{Rogers1894} also proved some similar sum-product identities such as 
\begin{align}
\sum_{n=0}^\infty \frac{q^{n^2}}{(q;q)_{2n}}&=\frac{(q^2,q^8,q^{10};q^{10})_\infty (q^6,q^{14};q^{20})_\infty}{(q;q)_\infty}, \quad  \label{S79}\\
\sum_{n=0}^\infty \frac{q^{n^2+n}}{(q;q)_{2n}}&=\frac{(q,q^9,q^{10};q^{10})_\infty (q^8,q^{12};q^{20})_\infty}{(q;q)_\infty}, \quad  \label{S99} \\
\sum_{n=0}^\infty \frac{q^{n^2+n}}{(q;q)_{2n+1}}&=\frac{(q^3,q^7,q^{10};q^{10})_\infty (q^4,q^{16};q^{20})_\infty}{(q;q)_\infty},  \quad  \label{S94}
\end{align}
Later Rogers \cite[p.\ 330 (3), 2nd Eq.]{Rogers1917}) proved another companion identity:
\begin{align}
\sum_{n=0}^\infty \frac{q^{n^2+2n}}{(q;q)_{2n+1}}=\frac{(q^4,q^6,q^{10};q^{10})_\infty (q^2,q^{18};q^{20})_\infty}{(q;q)_\infty}. \label{Rogers-1}
\end{align}


The Rogers-Ramanujan identities also serve as important examples for close relations between $q$-series and modular forms. The product sides are essentially reciprocals of some generalized Dedekind eta functions (see \eqref{general-eta}) and hence are modular functions. This is not observable from the sum sides, which is a basic $q$-hypergeometric series. A natural question is to ask when does a basic $q$-hypergeometric series become a modular form. In particular, a famous problem of Nahm is to determine for which positive definite $r\times r$ rational matrix $A$, $r$-dimensional rational vector $B$, and a rational scalar $C$ such that
$$f_{A,B,C}(q):=\sum_{n=(n_1,\dots,n_r)^\mathrm{T} \in (\mathbb{Z}_{\geq 0})^r} \frac{q^{\frac{1}{2}n^\mathrm{T} An+n^\mathrm{T} B+C}}{(q;q)_{n_1}\cdots (q;q)_{n_r}}$$
is a modular form. Such $(A,B,C)$ is called as a rank $r$ {\em modular triple}. The series $f_{A,B,C}(q)$ is therefore referred as {\em Nahm sums}. 

Several important families of $q$-series identities (such as Andrews-Gordon identities) can be also studied using vertex operators and representation of infinite-dimensional Lie algebras. This approach, pioneered by Lepowsky and Wilson in 1980s \cite{Lepowsky-Wilson,Lepowsky-Wilson-1985}, was one of the starting points in the development of vertex operator algebras and an important ingredient in the development of 2-dimensional conformal field theory (CFT) in physics. In this setup, the graded dimension obtained from a combinatorial bases of modules can be often interpreted as a Nahm sums. 
The Nahm sums that are relevant for rational CFT almost always take form with $A=G \otimes G'^{-1}$ where $G$ and $G'$ are ADET type Cartan matrices, and all such Nahm sums matrices are expected to give modular functions (with appropriate $B$ and $C$). Interestingly, {\em any} Nahm sum $f_{A,0,0}$ can be interpreted as the graded dimension of a special vertex algebra called {\em principal subspace} \cite{Milas-Penn}. 


Zagier \cite{Zagier} studied Nahm's problem and made significant progress when the rank $r\leq 3$. In particular, he proved that there are exactly seven rank one modular triples. In the rank two and three cases, Zagier provided a number of conjectural modular triples. Most of the rank two examples have been confirmed in the literature. For example, Vlasenko and Zwegers \cite{VZ} confirmed one modular triple in Zagier's list and discovered some new examples. Recently, the second author \cite{Wang-Two} confirmed more examples in Zagier's list. As a consequence, among the eleven rank two examples discovered by Zagier \cite[Table 2]{Zagier}, only one example is unproven. In the rank three case, Zagier \cite[Table 3]{Zagier} provided a list of twelve possible modular triples and proved three of them. The remaining nine examples were confirmed by the second author \cite{Wang-Three}.

In this paper, we are mainly concerned with the modularity of the Nahm sums associated with the tadpole diagram $T$. The tadpole Nahm sums were considered by Penn, Calinescu, and the first author in their work on twisted modules of principal subspace vertex algebras \cite{CMP}.
 Given a positive integer $r$, let $T_r$ be the tadpole Cartan matrix, that is, $T_r=(a_{ij})_{r\times r}$ such that 
 \begin{align*}
 a_{rr}=1, a_{ii}=2, 1 \leq i \leq r-1, a_{ij}=-1 ~~ (|i-j|=1), \quad \text{and} \quad a_{ij}=0 \quad \text{otherwise}.
 \end{align*}
 We let 
 $$\chi_0(x_1,\dots,x_r)=\chi_0(x_1,\dots,x_r;q):=\sum_{n=(n_1,\dots,n_r) \in \mathbb{Z}_{\geq 0}^r} \frac{q^{\frac12 {n}^\mathrm{T}T_r n} x_1^{n_1} \cdots x_n^{n_r}}{(q)_{n_1} \cdots (q)_{n_r}}$$
 be a generalized tadpole Nahm sum. It is easy to see that $$q^C\chi_0(q^{B_1},\dots,q^{B_r})=f_{A,B,C}(q)$$ 
 is the Nahm sum with $A=T_r$ and $B=(B_1,\dots,B_r)$.
There is only one standard $A_{2n}^{(2)}$-module of level one (up to isomorphism) and thus only one principal subspace of level $1$, corresponding to the unique standard level one module, 
whose character is $\chi_0(1,1,\dots,1)$, i.e. $x_i=1$ for all $i$.
In \cite{CMP} the so-called shifted characters $\chi_i$ were introduced by specializing $x_i=q$ and $x_j=1$ for $j \neq i$. Penn, Calinescu, and the first author \cite{CMP} stated a conjecture concerning the modularity of the characters $\chi_0(x_1,\dots,x_r)$.
\begin{conj}\label{cmp}
(Cf.\ \cite[Conjecture 1]{CMP}.)  The character $q^a \chi_0(1,\dots,1)$ is modular for some rational number $a$.
\end{conj}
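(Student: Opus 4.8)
The plan is to reduce the modularity question to an explicit evaluation: I would establish Rogers--Ramanujan type product formulas for $\chi_0(1,1,1)$ and for the three shifted characters $\chi_1,\chi_2,\chi_3$, and then observe that each product is a finite quotient of infinite products $(q^a,q^b;q^N)_\infty$, i.e. of generalized Dedekind eta functions, whose modular transformation laws are classical. Equality of $q^{a}\chi_0(1,1,1)$ with such a quotient then settles Conjecture \ref{cmp} in the rank $3$ case, and assembling the four characters into one vector yields the vector-valued modular function promised in the abstract. To guess the correct shift $a$ and the exact eta quotients before proving anything, I would first carry out Zagier's asymptotic analysis for $T_3$: the attached dilogarithm/effective-central-charge computation fixes the leading $q\to 1^-$ behaviour, hence the level and the normalizing exponent, after which a numerical $q$-expansion pins down the candidate products.

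The decisive structural feature of the tadpole is that the last node carries the diagonal entry $a_{33}=1$ and couples only to node $2$, so the inner sum over $n_3$ is an Euler sum. Since $\tfrac12 n_3^2=\binom{n_3}{2}+\tfrac12 n_3$, the $n_3$-exponent in $\tfrac12 n^{\mathrm{T}}T_3 n$ is $\binom{n_3}{2}+(\tfrac12-n_2)n_3$, and Euler's identity $\sum_{n\ge 0}q^{\binom n2}z^n/(q)_n=(-z;q)_\infty$ gives
\[
\sum_{n_3\ge0}\frac{q^{\frac12 n_3^2-n_2 n_3}}{(q)_{n_3}}=(-q^{1/2-n_2};q)_\infty=q^{-n_2^2/2}\,(-q^{1/2};q)_{n_2}\,(-q^{1/2};q)_\infty .
\]
Substituting this and using $n_1^2+n_2^2-n_1n_2-\tfrac12 n_2^2=\tfrac12 n_1^2+\tfrac12(n_2-n_1)^2$, I obtain
\[
\chi_0(1,1,1)=(-q^{1/2};q)_\infty\sum_{n_1,n_2\ge0}\frac{q^{\frac12 n_1^2+\frac12(n_2-n_1)^2}\,(-q^{1/2};q)_{n_2}}{(q)_{n_1}(q)_{n_2}} ,
\]
a rank-two sum whose Pochhammer weight satisfies $(-q^{1/2};q)_{n}/(q)_{n}=(q;q^2)_{n}/(q^{1/2};q^{1/2})_{2n}$, so that denominators of type $(q)_{2n}$ appear after a base change $q\mapsto q^{1/2}$. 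This is exactly the shape of Rogers' identities \eqref{S79}, \eqref{S99}, \eqref{S94} and \eqref{Rogers-1}, which is why those companions are singled out above; my intention is to carry out one further summation (a $q$-Chu--Vandermonde or Heine transformation, or equivalently the insertion of a Bailey pair in $n_2$) to collapse the residual sum into a combination of these single-sum products, with the shifted $\chi_i$ feeding the remaining members of the same family.

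Once the four products are in hand, modularity is automatic, and I would verify the vector-valued statement directly from the eta-quotient expressions: writing $F=(q^{a_0}\chi_0,\,q^{a_1}\chi_1,\,q^{a_2}\chi_2,\,q^{a_3}\chi_3)^{\mathrm{T}}$ with the normalizations determined in the first step, the invariance $F(\tau+1)=T\,F(\tau)$ is read off from the $q$-powers ($T$ diagonal), and $F(-1/\tau)=S\,F(\tau)$ follows from the Jacobi theta transformation applied to the numerator theta products together with the modularity of $(q)_\infty$. As a consistency check I would also confirm, as Zagier and the second author do in \cite{Zagier,Wang-Two,Wang-Three}, that both sides satisfy the same system of $q$-difference equations and agree to high order in $q$.

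The main obstacle is the middle step: rigorously evaluating the reduced rank-two sum and matching it to \eqref{S79}--\eqref{Rogers-1}. The half-integral powers $q^{1/2}$ force careful bookkeeping between the bases $q$, $q^{1/2}$ and $q^2$, and the cross term $-n_1n_2$ blocks a second naive Euler collapse, so success depends on selecting the right transformation and on correctly identifying which Rogers product each specialization produces. The final delicate point is to check that the four resulting products genuinely close up under $S$ into a $4$-dimensional representation of $\mathrm{SL}_2(\mathbb{Z})$ rather than leaking into further shifted sums; this is where the precise choice of companion characters $\chi_i$ must be verified against the $S$-action rather than merely assumed.
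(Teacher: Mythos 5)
Your opening reduction is correct and coincides with the paper's: summing the tadpole variable $n_3$ by Euler's identity \eqref{Euler} and absorbing the resulting finite Pochhammer into the $n_2$-summand is exactly how the proof of Theorem \ref{main2} begins (there written in base $q^2$ to avoid half-integral exponents), and your identification of Rogers' series \eqref{S79}--\eqref{Rogers-1} as the target is also right. But the step you defer --- ``a $q$-Chu--Vandermonde or Heine transformation, or equivalently the insertion of a Bailey pair in $n_2$'' --- is the entire technical content of the proof, and none of the tools you name applies as stated: the residual rank-two sum
\[
\sum_{i,j\ge 0}\frac{q^{2i^2+j^2-2ij}(-q;q^2)_j}{(q^2;q^2)_i(q^2;q^2)_j}
\]
carries a genuine quadratic exponent $q^{j^2}$ in the inner variable, so it is not a ${}_2\phi_1$ and Heine/Chu--Vandermonde do not collapse it. What the paper actually does is the constant-term method: writing $2i^2+j^2-2ij=i^2+(i-j)^2$, it inserts $\mathrm{CT}_z\bigl[z^{i-j-k}\bigr]$ against a Jacobi theta function $\sum_k z^{-k}q^{k^2}$, evaluates all three $z$-series in closed product form via \eqref{Euler}, the $q$-binomial theorem \eqref{q-binomial} and \eqref{Jacobi}, regroups the resulting product, and re-extracts the constant term to obtain
\[
\frac{(-q;q^2)_\infty}{(q^2;q^2)_\infty}\sum_{n\ge 0}\frac{q^{n^2}}{(q^2;q^2)_n}\sum_{i\in\mathbb{Z}}q^{2i^2-2ni},
\]
whose split by the parity of $n$ is precisely Rogers' sums \eqref{S79} and \eqref{S94} multiplied by $\theta_3$ and $\theta_2$ (\eqref{theta2-prod}, \eqref{theta3-prod}). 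Without this device (or a fully specified Bailey-pair substitute), your argument does not reach the product side, so the modularity conclusion is not yet established.

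A secondary inaccuracy: the resulting identities are not single generalized-eta quotients but two-term combinations (see the right side of \eqref{sum-id-1}); each is still manifestly modular, so Conjecture \ref{cmp} for $r=3$ would follow once the identity is proved, but your proposed four-component vector does not close under $S$. The paper needs six components --- including $F_5,F_6$ obtained by $\tau\mapsto\tau+1$ --- realized as $\mathfrak{f}_\bullet(\tau)^3\eta(\tau)^{-3}$ times weight-$3/2$ theta derivatives (Theorem \ref{main}), so the closure check you flag at the end is indeed where a four-dimensional ansatz would fail.
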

The rank two case ($r=2$) was proved in \cite{CMP}.
The main goal of this paper is to address the conjecture for $r=3$ case. In this case, we write explicitly
\begin{align}\label{eq-chi}
\chi_0(x_1,x_2,x_3)=\chi_0(x_1,x_2,x_3;q)=\sum_{i,j,k \geq 0} \frac{q^{i^2+j^2+ \frac12 k^2-ij -jk}x_1^{i} x_2^{j} x_3^{k}}{(q)_{i}(q)_{j}(q)_{k}}.
\end{align}
We record four shifted $q$-characters that are of interest here:
\begin{align*}
&F_1(q):=\chi_0(1,1,1), \quad F_2(q):=\chi_0(1,1,q^{\frac12}), \\ &F_3(q):=\chi_0(q,q^{-1},q^{\frac12}), \quad F_4(q):=\chi_0(q^{-1},q,1).
\end{align*}
We are mainly concerned with  modular properties of these sums. We first write down the tadpole Cartan matrix $T_3$ and its inverse:
$$T_3=\begin{pmatrix}
2 & -1 & 0 \\ -1 & 2 & -1 \\ 0 & -1 & 1
\end{pmatrix}, \quad T_3^{-1}=\begin{pmatrix}
1 & 1 & 1 \\ 1 & 2 & 2 \\ 1 & 2 & 3
\end{pmatrix}.
$$
Note that $T_3^{-1}$ is the matrix part of the sixth example in Zagier's list \cite[Table 3]{Zagier} (see also \cite[Example 6]{Wang-Three}) with the first row/column and the third row/column interchanged. Zagier stated six possible modular triples with $T_3^{-1}$ as the matrix part. The vector parts are
\begin{align}
B\in \left\{ \begin{pmatrix} 0 \\ 0 \\ 0 \end{pmatrix},   \begin{pmatrix} 1/2 \\ 1 \\ 3/2 \end{pmatrix}, \begin{pmatrix} 1/2 \\ 0 \\ 1/2 \end{pmatrix}, \begin{pmatrix} 0 \\ 1 \\ 1 \end{pmatrix}, \begin{pmatrix}
-1/2 \\ 0 \\ -1/2 \end{pmatrix},  \begin{pmatrix} 1/2 \\ 1 \\ -1/2 \end{pmatrix}\right\}.
\end{align}
Here the first and the third entries in each vector have been interchanged and we have reordered these vectors. 
Zagier \cite[p.\ 50]{Zagier} conjectured that there are some duality between modular triples. Namely, he mentioned that when $(A,B,C)$ is a rank $r$ modular triple, then it is likely that
$$(A^\star, B^\star, C^\star)=(A^{-1},A^{-1}B,\frac{1}{2}B^\mathrm{T} A^{-1}B-\frac{r}{24}-C)$$
is also a rank $r$ modular triple. This motivates us to consider the dual cases to the six modular triples related to $T_3^{-1}$. To be specific, the dual vectors are 
\begin{align*}
    B^\star \in \left\{\begin{pmatrix}
0 \\ 0 \\ 0
    \end{pmatrix},  \begin{pmatrix}
        0 \\ 0 \\ 1/2 
    \end{pmatrix}, \begin{pmatrix}
        1 \\ -1  \\ 1/2 
    \end{pmatrix}, \begin{pmatrix}
        -1 \\ 1 \\ 0
    \end{pmatrix}, \begin{pmatrix}
        -1 \\ 1 \\ -1/2
    \end{pmatrix},  \begin{pmatrix}
        -2 \\ 2 \\ -1/2
    \end{pmatrix} \right\}.
\end{align*}
We find that they are indeed modular triples for suitable $C$. This is a consequence of the following set of Rogers-Ramanujan type identities. Before we state them, we introduce the compact notations
$$J_m:=(q^m;q^m)_\infty, \quad J_{a,m}:=(q^a,q^{m-a},q^m;q^m)_\infty.$$
\begin{theorem} \label{main2}
We have
\begin{align}
\sum_{i,j,k\geq 0} \frac{q^{2i^2+2j^2+k^2-2ij-2jk}}{(q^2;q^2)_i(q^2;q^2)_j(q^2;q^2)_k}&=\frac{J_4^5J_{40}}{J_1J_2^2J_8^2J_{8,40}}+2q\frac{J_8^2J_{6,20}J_{8,40}}{J_1J_4^2J_{40}}, \label{sum-id-1} \\
\sum_{i,j,k\geq 0} \frac{q^{i^2+j^2+\frac{1}{2}(k^2+k)-ij-jk}}{(q;q)_i(q;q)_j(q;q)_k}
&=\frac{J_2^6J_{1,10}J_{8,20}}{J_1^5J_4^2J_{20}}+2q\frac{J_4^2J_{4,10}J_{2,20}}{J_1^3J_{20}}, \label{sum-id-2} \\
\sum_{i,j,k\geq 0} \frac{q^{i^2+j^2+\frac{1}{2}(k^2+k)-ij-jk+i-j}}{(q;q)_i(q;q)_j(q;q)_k}
&=2\frac{J_2J_4^2J_{20}}{J_1^3J_{4,20}}+\frac{J_2^6J_{3,10}J_{4,20}}{J_1^5J_4^2J_{20}},  \label{sum-id-3} \\
\sum_{i,j,k\geq 0} \frac{q^{2i^2+2j^2+k^2-2ij-2jk-2i+2j}}{(q^2;q^2)_i(q^2;q^2)_j(q^2;q^2)_k}
&=2\frac{J_8^2J_{2,20}J_{16,40}}{J_1J_4^2J_{40}}+q\frac{J_4^4J_{8,20}J_{4,40}}{J_1J_2^2J_8^2J_{40}},  \label{sum-id-4} \\
\sum_{i,j,k\geq 0} \frac{q^{i^2+j^2+\frac{1}{2}(k^2-k)-ij-jk-i+j}}{(q;q)_i(q;q)_j(q;q)_k}&=4\frac{J_2J_4^2J_{20}}{J_1^3J_{4,20}}+2\frac{J_2^6J_{3,10}J_{4,20}}{J_1^5J_4^2J_{20}},  \label{sum-id-5} \\
\sum_{i,j,k\geq 0} \frac{q^{i^2+j^2+\frac{1}{2}(k^2-k)-ij-jk-2i+2j}}{(q;q)_i(q;q)_j(q;q)_k}
&=2q^{-1}\frac{J_2^6J_{1,10}J_{8,20}}{J_1^5J_4^2J_{20}}+4\frac{J_4^2J_{4,10}J_{2,20}}{J_1^3J_{20}}.  \label{sum-id-6}
\end{align}
\end{theorem}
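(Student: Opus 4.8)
The plan is to prove these six triple-sum Rogers–Ramanujan identities by reducing each triple sum to a known single-variable $q$-series and then matching the result against the stated eta-quotient product side. Looking at the quadratic form $\tfrac12 n^{\mathrm T}T_3 n = i^2+j^2+\tfrac12 k^2-ij-jk$, I observe that the $k$-variable enters only through $\tfrac12 k^2$ (or $\tfrac12(k^2\pm k)$ after the shift) and the coupling $-jk$, while the $(i,j)$-block is the rank-two tadpole form $i^2+j^2-ij$. So the first step is to collapse the innermost sum: I would perform the sum over $i$ first using the Durfee-type or finite Rogers–Ramanujan mechanism, or alternatively sum over $k$ first. Concretely, fixing $j$ and $k$, the sum over $i$ of $q^{i^2-ij}/(q)_i$ is a shifted partial theta / Rogers–Ramanujan sum that can be handled by a $q$-binomial or Bailey-type evaluation; iterating inward should leave a single sum in one variable.

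\medskip

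The cleaner route, which I would pursue in parallel, is the \emph{Bailey pair / Bailey chain} method. Because $T_3$ is the tadpole (path) Cartan matrix, its inverse is lower-triangular-friendly and the nested structure $i^2+j^2+\tfrac12 k^2-ij-jk$ is exactly what iterated Bailey transforms produce. The strategy is: start from a standard unit Bailey pair relative to $q$ (or relative to $q^2$, to account for the $(q^2;q^2)_n$ denominators and the $q\to q^2$ rescaling visible in \eqref{sum-id-1} and \eqref{sum-id-4}), apply the Bailey chain twice to generate the two tightly-coupled sums in $i$ and $j$, and treat the loose half-integral $k$-variable separately. After two iterations the sum should reduce to a theta-like series in $k$ whose summand is $q^{\frac12 k^2}/(q)_k$ times a residual factor; summing this by Jacobi's triple product or the Rogers–Fine identity yields a bilinear combination of theta functions, which is precisely the two-term shape $A\cdot(\text{eta quotient})+q^{\pm 1}B\cdot(\text{eta quotient})$ appearing on each right-hand side.

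\medskip

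Once the left-hand side is expressed as a finite linear combination of theta quotients, the remaining task is purely an identity among modular forms: I would rewrite both sides entirely in terms of the generalized Dedekind eta functions $J_m$ and $J_{a,m}$ using the Jacobi triple product, clear denominators, and verify the resulting theta-function identity. This last verification can be made rigorous by the standard valence-formula argument: both sides are (weakly holomorphic) modular forms of the same weight for a congruence subgroup—here the relevant levels are $20$ and $40$ as dictated by the subscripts—so checking agreement of their $q$-expansions up to the Sturm bound forces equality. The six identities come in dual pairs (the $B$ versus $B^\star$ duality mentioned above), so I would also exploit that symmetry: several of them, for instance \eqref{sum-id-3} and \eqref{sum-id-5}, differ only by an overall factor of $2$ on both sides together with a sign flip $k\mapsto -k$ in the shift, suggesting that a single master identity plus elementary manipulations of the shifted sum (replacing $\tfrac12(k^2+k)$ by $\tfrac12(k^2-k)$ corresponds to $k\mapsto -k$, i.e. reflecting the $x_3$-specialization) yields the companion for free.

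\medskip

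The main obstacle I anticipate is the bookkeeping in the Bailey-chain reduction: because the $k$-summation carries a half-integral quadratic exponent $\tfrac12 k^2$, it does not fit a Bailey pair over $q$ but rather over $q^{1/2}$, so I expect to split each sum according to the parity of $k$ (equivalently, work over the base $q^2$ and separate even/odd $k$), which is exactly what produces the \emph{two} product terms on every right-hand side. Keeping track of the correct prefactors $q^{\pm 1}$ and the integer coefficients $2$ and $4$ through this parity split, and correctly identifying which generalized eta component $J_{a,m}$ each parity class contributes, will be the delicate part; the modular-forms endgame, by contrast, is mechanical once the theta decomposition is in hand.
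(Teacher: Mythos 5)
Your overall shape --- collapse the triple sum to a single sum, split by parity to obtain the two product terms, and finish with known theta identities --- matches the paper's strategy, but the reduction mechanism you propose has a genuine flaw. The Bailey chain produces Andrews--Gordon-type quadratic forms $N_1^2+\cdots+N_k^2$ with $N_i=n_i+\cdots+n_k$, i.e.\ forms whose matrix has \emph{positive} off-diagonal couplings of $\min(i,j)$ type; that is exactly the $T_3^{-1}$ side of Zagier's duality, not the $T_3$ side treated here, whose couplings $-ij-jk$ are negative. So the claim that ``the nested structure $i^2+j^2+\tfrac12k^2-ij-jk$ is exactly what iterated Bailey transforms produce'' is not correct, and two applications of the chain will not land on these sums. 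Your fallback of summing over $i$ first also stalls: $\sum_i q^{i^2-ij}/(q)_i$ has no product evaluation for a general fixed $j$. The step that actually works --- and that your plan does not supply --- is to sum over $k$ first (Euler's identity applies because the $(3,3)$-entry of $T_3$ equals $1$, so after $q\to q^2$ the $k$-sum is $\sum_k q^{k^2-k}(x_3q^{1-2j})^k/(q^2;q^2)_k=(-x_3q^{1-2j};q^2)_\infty$), and then to decouple the remaining $-2ij$ cross term by a constant-term insertion of the Jacobi triple product in an auxiliary variable $z$. Re-expanding the resulting product and extracting the constant term a second time yields single sums of the precise shape $\sum_n q^{n^2+an}/(q)_{2n}$ or $\sum_n q^{n^2+an}/(q)_{2n+1}$ multiplied by $\theta_2$ or $\theta_3$, which are then evaluated by Rogers' classical modulus-$20$ identities \eqref{S79}--\eqref{Rogers-1}; no Sturm-bound computation is needed for this theorem.

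Two of your side observations are essentially right and do appear in the paper: the parity split in the reduced single sum is what produces the two product terms on each right-hand side, and \eqref{sum-id-5} and \eqref{sum-id-6} are deduced from \eqref{sum-id-3} and \eqref{sum-id-2} via the relations $\chi_0(q^{-1},q,q^{-1/2})=2\chi_0(q,q^{-1},q^{1/2})$ and $\chi_0(q^{-2},q^2,q^{-1/2})=2q^{-1}\chi_0(1,1,q^{1/2})$. But these relations are not the naive reflection $k\mapsto-k$ you suggest (the summation range is $k\ge 0$, and the required substitution also interchanges the roles of $i$ and $j$); in the paper they are proved by the shift $z\mapsto q^2z$ inside the constant-term representation, so even this shortcut depends on the machinery your proposal is missing.
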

Note that the left sides of \eqref{sum-id-1}--\eqref{sum-id-6} are the Nahm sums
\begin{align*}
F_1(q^2),  F_2(q), F_3(q), F_4(q^2), \chi_0(q^{-1},q,q^{-\frac{1}{2}};q) ~~\text{ and}~~ \chi_0(q^{-2},q^2,q^{-\frac{1}{2}};q),
\end{align*}
respectively.
In light of the modularity of the functions $J_m$ and $J_{a,m}$, it is easy to verify (e.g., using the algorithm in \cite{Garvan-Liang}) that the Nahm sums in \eqref{sum-id-1}--\eqref{sum-id-6} are all modular functions after multiplying a factor $q^C$ with $C$ being 
$$-\frac{7}{40},\quad \frac{1}{40}, \quad \frac{9}{40}, \quad \frac{17}{40}, \quad \frac{9}{40}, \quad \frac{41}{40},$$
respectively. In particular, we see that the identity \eqref{sum-id-1} confirms Conjecture \ref{cmp} in the case $r=3$. Interestingly, there are essentially four different modular functions for these six Nahm sums. In fact, comparing the right sides of \eqref{sum-id-3} and \eqref{sum-id-5}, and \eqref{sum-id-2} and \eqref{sum-id-6},  we see that
\begin{align}
\chi_0(q^{-1},q,q^{-\frac{1}{2}};q)&=2\chi_0(q,q^{-1},q^{\frac{1}{2}};q), \label{Nahm-relation-1} \\
\chi_0(q^{-2},q^2,q^{-\frac{1}{2}};q)&=2q^{-1}\chi_0 (1,1,q^{\frac{1}{2}};q). \label{Nahm-relation-2}
\end{align}
This is not obvious from their original definitions. We will explain this in the proof of this theorem.

Next, we will study the tadpole Nahm sums from the point of vector-valued modular forms. By doing so, we will be able to see their modular transformation properties more clearly .

Let $\mathbb{H}$ denote the upper half complex plane. Throughout this paper we denote $q=e^{2 \pi i \tau}$, $\tau \in \mathbb{H}$. Since the shifted characters $\chi_0(1,1,1)$ and $\chi_0(q^{-1},q,1)$ have $q$-powers in $\mathbb{Z}+\frac12$ it is convenient to consider
\begin{align}
F_5(q):=\chi_0(1,1,1)|_{\tau \to \tau+1}&=\sum_{i,j,k \geq 0} (-1)^k \frac{q^{ i^2+j^2+\frac12 {k^2}-ij-jk}}{(q)_i(q)_j(q)_k}, \label{F5-defn}\\
F_6(q):=\chi_0(q^{-1},q,1)|_{\tau \to \tau+1}&=\sum_{i,j,k \geq 0} (-1)^k \frac{q^{ i^2+j^2+\frac12 {k^2}-ij-jk-i+j}}{(q)_i(q)_j(q)_k}. \label{F6-defn}
\end{align}
For $1 \leq i \leq 6$ we define $\tilde{F}_i(q):=q^{\lambda_i}{F}_i(q)$ where
$$\lambda_1=-\frac{7}{80}, ~~\lambda_2=\frac{1}{40}, ~~
 \lambda_3=\frac{9}{40}, ~~ \lambda_4=\frac{17}{80}, ~~ \lambda_5=-\frac{7}{80}, ~~\lambda_6=\frac{17}{80}.$$

We define Weber's modular functions \cite{We}:
\begin{align}\label{Weber-defn}
\mathfrak{f}(\tau):=q^{-1/48} (-q^{1/2};q)_\infty, \ \  \mathfrak{f}_1(\tau):=q^{-1/48} (q^{1/2};q)_\infty, \ \ \mathfrak{f}_2(\tau):=q^{1/24}(-q;q)_\infty.
\end{align}
For $k>0$ and $j\in \mathbb{Q}$ we let
\begin{align}
(\partial \Theta)_{j,k}(\tau) &:=\sum_{n \in \mathbb{Z}} (2kn+j)q^{(2kn+j)^2/(4k)}, \label{partial-Theta-defn}\\
 (\partial G)_{j,k}(\tau)&:=\sum_{n \in \mathbb{Z}} (-1)^n (2kn+j)q^{(2kn+j)^2/(4k)}. \label{partial-G-defn}
\end{align}
When $k,j \in \mathbb{Q}$, these are essentially Jacobi theta series of weight $3/2$.
\begin{theorem} \label{main} We have the following $q$-identities:
\begin{align*}
\tilde{F}_1(q)& =\frac{\mathfrak{f}(\tau)^3}{\eta(\tau)^3} (\partial \Theta)_{1,\frac52}(\tau), \\
\tilde{F}_2(q)&=2\frac{\mathfrak{f}_2(\tau)^3}{\eta(\tau)^3} (\partial \Theta)_{\frac12,\frac52}(\tau), \\
\tilde{F}_3(q)&=2\frac{\mathfrak{f}_2(\tau)^3}{\eta(\tau)^3} (\partial \Theta)_{\frac32,\frac52}(\tau), \\
\tilde{F}_4(q)&=\frac{\mathfrak{f}(\tau)^3}{\eta(\tau)^3} (\partial \Theta)_{2,\frac52}(\tau), \\
\tilde{F}_5(q)& =\frac{\mathfrak{f}_1(\tau)^3}{\eta(\tau)^3} (\partial G)_{1,\frac52}(\tau), \\
\tilde{F}_6(q)&=\frac{\mathfrak{f}_1(\tau)^3}{\eta(\tau)^3} (\partial G)_{2,\frac52}(\tau).
\end{align*}
In particular, $\tilde{F}_i(q)$ are modular functions on some congruence subgroups of $\mathrm{SL}(2,\mathbb{Z})$.
Moreover, $( \tilde{F}_i(q) )_{1 \leq i \leq 6}$ transforms as a vector valued modular function on $\mathrm{SL}(2,\mathbb{Z})$.
\end{theorem}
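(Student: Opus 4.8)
The plan is to deduce Theorem~\ref{main} from the Rogers--Ramanujan type evaluations of Theorem~\ref{main2} together with product formulas for the weight-$3/2$ theta series $(\partial\Theta)_{j,5/2}$ and $(\partial G)_{j,5/2}$, and then to read off the modular behavior on the generators $T\colon\tau\mapsto\tau+1$ and $S\colon\tau\mapsto-1/\tau$ of $\mathrm{SL}(2,\mathbb{Z})$. First I would establish the six scalar identities. Since the left sides of \eqref{sum-id-1}--\eqref{sum-id-6} are $F_1(q^2),F_2(q),F_3(q),F_4(q^2)$ and the two remaining sums, a substitution $q\to q^{1/2}$ in the even-argument cases puts every $\tilde{F}_i=q^{\lambda_i}F_i$ on a common footing, and the right sides become finite combinations of the eta quotients $J_m$ and the generalized Dedekind eta products $J_{a,m}$. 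On the other side I would turn $\mathfrak{f}^3/\eta^3$, $\mathfrak{f}_1^3/\eta^3$, $\mathfrak{f}_2^3/\eta^3$ into eta quotients via $\mathfrak{f}_1(\tau)=\eta(\tau/2)/\eta(\tau)$, $\mathfrak{f}_2(\tau)=\eta(2\tau)/\eta(\tau)$, $\mathfrak{f}(\tau)=e^{-\pi i/24}\eta((\tau+1)/2)/\eta(\tau)$, and express the theta derivatives as generalized eta products: splitting the defining sum of $(\partial\Theta)_{j,5/2}$ into residue classes modulo $5$ (modulo $10$ in the half-integral cases) and applying the Jacobi triple product yields a two-term product expression that matches the two-term shape of Theorem~\ref{main2} term by term. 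Each identity is thereby reduced to an elementary relation among $J_m$ and $J_{a,m}$, verifiable by theta-function algebra and cross-checked by the algorithm of \cite{Garvan-Liang}; the relations \eqref{Nahm-relation-1}--\eqref{Nahm-relation-2} drop out of the same bookkeeping.

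Individual modularity is then immediate. Every building block is modular, and a weight count gives $0+\tfrac32-\tfrac32=0$ for $\mathfrak{f}^3\eta^{-3}(\partial\Theta)_{j,5/2}$ and likewise for the other five, so each $\tilde{F}_i$ is a weight-$0$ modular function; invariance under a suitable congruence subgroup follows from the transformation laws of $\eta$, of the Weber functions, and of the generalized eta products occurring in the already-established scalar identities.

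The action of $T$ is straightforward and is in fact designed into the definitions \eqref{F5-defn}--\eqref{F6-defn}. Using $\eta(\tau+1)=e^{\pi i/12}\eta(\tau)$, $\mathfrak{f}(\tau+1)=e^{-\pi i/24}\mathfrak{f}_1(\tau)$, $\mathfrak{f}_1(\tau+1)=e^{-\pi i/24}\mathfrak{f}(\tau)$, $\mathfrak{f}_2(\tau+1)=e^{\pi i/12}\mathfrak{f}_2(\tau)$, together with the parity computation $(\partial\Theta)_{j,5/2}(\tau+1)=e^{\pi ij^2/5}(\partial G)_{j,5/2}(\tau)$ for integral $j$ but $(\partial\Theta)_{j,5/2}(\tau+1)=e^{\pi ij^2/5}(\partial\Theta)_{j,5/2}(\tau)$ for half-integral $j$, one finds that $\rho(T)$ is a permutation-with-phases: it swaps $\tilde{F}_1\leftrightarrow\tilde{F}_5$ and $\tilde{F}_4\leftrightarrow\tilde{F}_6$ (the $\mathfrak{f}\leftrightarrow\mathfrak{f}_1$ swap matched by the $\Theta\leftrightarrow G$ swap) while fixing $\tilde{F}_2,\tilde{F}_3$ up to roots of unity.

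The substantive point, and the main obstacle, is the $S$-transformation. Here I would combine $\eta(-1/\tau)=\sqrt{-i\tau}\,\eta(\tau)$, the Weber involution $\mathfrak{f}(-1/\tau)=\mathfrak{f}(\tau)$, $\mathfrak{f}_1(-1/\tau)=\mathfrak{f}_2(\tau)$, $\mathfrak{f}_2(-1/\tau)=\mathfrak{f}_1(\tau)$, and the Poisson-summation transformation of the weight-$3/2$ unary theta family, which expresses each $(\partial\Theta)_{j,5/2}(-1/\tau)$ and $(\partial G)_{j,5/2}(-1/\tau)$ as $(-i\tau)^{3/2}$ times an explicit finite Gauss-sum combination of the same family (equivalently, the Weil representation attached to $Q(x)=\tfrac52x^2$). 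The factor $(-i\tau)^{3/2}$ cancels the transformation of $\eta^{-3}$, leaving a constant matrix. The delicate part is closure. One clean anchor is that the integral-index functions close among themselves: the antisymmetry $(\partial\Theta)_{5-j,5/2}=-(\partial\Theta)_{j,5/2}$ leaves $j=1,2$ independent, their $S$-images re-expand over residues modulo $5$ into the same pair, and since $\mathfrak{f}$ is $S$-fixed the block $\{\tilde{F}_1,\tilde{F}_4\}$ is invariant. For the complementary block, $S$ interchanges $\mathfrak{f}_1\leftrightarrow\mathfrak{f}_2$ and, at the level of theta characteristics, interchanges the untwisted family with the $(-1)^n$-twisted family while exchanging integral and half-integral indices; one must verify that the $S$-images of $\tilde{F}_2,\tilde{F}_3$ land in the span of $\tilde{F}_5,\tilde{F}_6$ and conversely, with no spurious theta components surviving. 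Carrying out the Gauss sums at $k=\tfrac52$, checking that the Weber exchange and the theta exchange interlock into a well-defined $\rho(S)\in GL_6(\mathbb{C})$, and confirming that $\rho(S),\rho(T)$ satisfy the defining relations of $\mathrm{SL}(2,\mathbb{Z})$ is the crux of the argument; the rest is routine manipulation of the building blocks.
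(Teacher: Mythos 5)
The modular-transformation half of your argument --- the $T$-action, the $S$-action via Poisson summation and the Weil representation attached to $\frac52 x^2$, the cancellation of $(-i\tau)^{3/2}$ against $\eta^{-3}$, and the closure of the block $\{\tilde F_1,\tilde F_4\}$ and of the complementary block under $S$ --- is sound and is essentially the paper's transformation lemma together with Corollary \ref{gam}. The gap is in your proof of the six scalar identities, which is the actual content of the theorem. You claim that splitting $(\partial\Theta)_{j,\frac52}(\tau)=\sum_{n}(5n+j)q^{(5n+j)^2/10}$ into residue classes and applying the Jacobi triple product yields a two-term product expression that matches the right-hand sides of Theorem \ref{main2} term by term. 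The Jacobi triple product applies to unary theta series of weight $1/2$, i.e.\ to sums of the form $\sum_n(\pm1)^nq^{Q(n)}z^n$; it gives no product representation for a series carrying the linear factor $(5n+j)$. The series $(\partial\Theta)_{j,\frac52}$ is a weight-$3/2$ cusp form that is \emph{not} an eta quotient, and the identities at stake (equivalently \eqref{T1-id}--\eqref{T4-id}) assert that it equals a \emph{sum} of two eta-quotient-type terms; such a statement cannot be reduced to ``an elementary relation among $J_m$ and $J_{a,m}$'' by product manipulations. This is precisely why the paper must resort either to a Wronskian argument --- packaging the $\tilde F_i$ (via Theorem \ref{main2} and the $W_iZ_j$ decomposition) and the theta-side expressions into two $6$-dimensional vector-valued modular forms with identical $S$- and $T$-matrices, and showing via Proposition \ref{ODE} that the Wronskian of their difference vanishes to order greater than $\frac{\ell(\ell-1)}{12}$, forcing the difference to be zero --- or to Sturm's criterion on $M_2(\Gamma_1(200))$, which requires a computer verification of $2401$ coefficients.

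Your parenthetical appeal to the Garvan--Liang algorithm as a ``cross-check'' gestures toward the second of these routes, but in your write-up it is an afterthought rather than the engine of the proof: the mechanism you actually describe (Jacobi triple product turning $(\partial\Theta)_{j,\frac52}$ into a two-term product) does not exist, and without some substitute --- a Sturm bound on an explicit congruence subgroup, a vector-valued Sturm criterion as in \cite{MR}, or the Wronskian/MLDE argument --- the six identities, and hence the theorem, remain unproved.
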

This in particular proves and extends Conjecture \ref{cmp} for $r=3$.

The rest of this paper is organized as follows. In Section \ref{sec-identities} we present  a proof for Theorem \ref{main2}. The idea is to use constant term method to reduce triple sums to some single sums and use Rogers' identities \eqref{S79}--\eqref{Rogers-1}. In Section \ref{sec-proofs} we give two different proofs for Theorem \ref{main}. Finally, in Section \ref{sec-general} we prove that some other Nahm sums associated with the tadpole Cartan matrix are not modular, and we give a general conjecure on modular Nahm sums.

\begin{rem} \label{new-basis} It will be useful to observe another basis of $V={\rm Span}\{ \tilde{F}_i(q): 1 \leq 1 \leq 6 \}$:
\begin{align*}
g_1(q)&:= \tilde{F}_1(q)+\tilde{F}_5(q)=q^{-7/80}(2+12q+30q^2+\cdots) \in q^{-7/80} \mathbb{C}[[q]], \\
g_2(q)&:=\tilde{F}_1(q)-\tilde{F}_5(q)=q^{33/80}(6+18q+54q^2+\cdots) \in q^{33/80}\mathbb{C}[[q]], \\
g_3(q)&:=\tilde{F}_4(q)+\tilde{F}_6(q)=q^{17/80}(4+6q+30q^2+\cdots) \in q^{17/80} \mathbb{C}[[q]], \\
g_4(q)&:=\tilde{F}_4(q)-\tilde{F}_6(q)=q^{57/80}(6+16q+42q^2+\cdots)  \in q^{57/80}\mathbb{C}[[q]],  \\
g_5(q)&:=\tilde{F}_2(q)=q^{1/40}(1+6q+15q^2+\cdots) \in q^{1/40} \mathbb{C}[[q]], \\ g_6(q)&:=\tilde{F}_3(q)=q^{9/40}(3+11q+30q^2+\cdots) \in q^{9/40}\mathbb{C}[[q]].
\end{align*}
Observe that now the leading $q$-series powers are non-congruent 
modulo $\mathbb{Z}$.
\end{rem}

\section{Proof of Theorem \ref{main2}}\label{sec-identities}

Recall the $q$-binomial theorem \cite[Theorem 1.3.1]{Berndt-book}:
\begin{align}\label{q-binomial}
\sum_{n=0}^\infty \frac{(a;q)_n}{(q;q)_n}z^n=\frac{(az;q)_\infty}{(z;q)_\infty}, \quad |z|<1.
\end{align}
As important corollaries of this theorem, Euler's $q$-exponential identities state that \cite[Corollary 1.3.2]{Berndt-book}
\begin{align}\label{Euler}
\sum_{n=0}^\infty \frac{z^n}{(q;q)_n}=\frac{1}{(z;q)_\infty}, \quad \sum_{n=0}^\infty \frac{q^{\binom{n}{2}} z^n}{(q;q)_n}=(-z;q)_\infty, \quad |z|<1.
\end{align}
The Jacobi triple product identity \cite[Theorem 1.3.3]{Berndt-book} is
\begin{align}\label{Jacobi}
(q,z,q/z;q)_\infty=\sum_{n=-\infty}^\infty (-1)^nq^{\binom{n}{2}}z^n.
\end{align}
It gives product representations for two important unary Jacobi theta functions:
\begin{align}
&\theta_2(\tau):=\sum_{n\in \mathbb{Z}}q^{(n+1/2)^2}=2q^{1/4}\frac{J_4^2}{J_2}, \label{theta2-prod} \\
&\theta_3(\tau):=\sum_{n\in \mathbb{Z}}q^{n^2}=\frac{J_2^5}{J_1^2J_4^2}. \label{theta3-prod}
\end{align}

We will use the constant term method. For any series $f(z)=\sum_{n \in \mathbb{Z}}a(n)z^n$, we define the operator
$$\mathrm{CT}[f(z)]=a(0),$$
which extracts the constant term of $f(z)$. Obviously, for any complex number $\alpha$ and integer $\beta$ with $\alpha \beta\neq 0$, we have
\begin{align}\label{constant-id}
\mathrm{CT}[f(\alpha z^\beta)]=\mathrm{CT}[f(z)].
\end{align}
\begin{proof}[Proof of Theorem \ref{main2}]
After replacing $q$ by $q^2$ in \eqref{eq-chi}, we have
\begin{align}
\chi_0(x_1,x_2,x_3;q^2)=\sum_{i,j,k\geq 0} \frac{q^{2i^2+2j^2+k^2-2ij-2jk}x_1^ix_2^jx_3^k}{(q^2;q^2)_i(q^2;q^2)_j(q^2;q^2)_k}. \label{F-start}
\end{align}
We have by \eqref{Euler} and \eqref{Jacobi} that
\begin{align}
\chi_0(x_1,x_2,x_3;q^2)&=\sum_{i,j\geq 0}\frac{q^{2i^2+2j^2-2ij}x_1^ix_2^j}{(q^2;q^2)_i(q^2;q^2)_j}\sum_{k\geq 0} \frac{q^{k^2-k}\cdot q^{(1-2j)k}x_3^k}{(q^2;q^2)_k} \nonumber \\
&=\sum_{i,j\geq 0} \frac{q^{2i^2+2j^2-2ij}x_1^ix_2^j}{(q^2;q^2)_i(q^2;q^2)_j} (-x_3q^{1-2j};q^2)_\infty. \label{F-exp}
\end{align}

(1) We have by \eqref{F-exp} that
\begin{align}
&\chi_0(1,1,1;q^2)=\sum_{i\geq 0} \frac{q^{2i^2+2j^2-2ij}}{(q^2;q^2)_i(q^2;q^2)_j}(-q^{1-2j};q^2)_\infty \nonumber \\
&=(-q;q^2)_\infty \sum_{i,j\geq 0} \frac{q^{2i^2+j^2-2ij}(-q;q^2)_j}{(q^2;q^2)_i(q^2;q^2)_j} \nonumber \\
&=(-q;q^2)_\infty \mathrm{CT}\left[ \sum_{i\geq 0} \frac{q^{i^2}z^i}{(q^2;q^2)_i}\sum_{j\geq 0} \frac{(-q;q^2)_jz^{-j}}{(q^2;q^2)_j} \sum_{k=-\infty}^\infty z^{-k}q^{k^2}\right] \nonumber \\
&=(-q;q^2)_\infty \mathrm{CT}\left[ \frac{(-qz,-q/z,-qz,-q/z,q^2;q^2)_\infty}{(1/z;q^2)_\infty} \right] \quad (\text{by \eqref{Euler} and \eqref{Jacobi})}\nonumber \\
&=\frac{(-q;q^2)_\infty }{(q^2;q^2)_\infty} \mathrm{CT}\left[ \frac{(-qz,-q/z,q^2;q^2)_\infty^2}{(1/z;q^2)_\infty}\right]  \nonumber \\
&=\frac{(-q;q^2)_\infty }{(q^2;q^2)_\infty} \mathrm{CT}\left[ \sum_{n=0}^\infty \frac{z^{-n}}{(q^2;q^2)_n} \sum_{i=-\infty}^\infty z^{i}q^{i^2} \sum_{j=-\infty}^\infty z^{j}q^{j^2} \right]  \quad \text{(by \eqref{Euler} and \eqref{Jacobi})} \nonumber \\
&=\frac{(-q;q^2)_\infty }{(q^2;q^2)_\infty} \sum_{n=0}^\infty \frac{1}{(q^2;q^2)_n} \sum_{i+j=n}q^{i^2+j^2} \nonumber \\
&=\frac{(-q;q^2)_\infty }{(q^2;q^2)_\infty} \sum_{n=0}^\infty \frac{q^{n^2}}{(q^2;q^2)_n} \sum_{i=-\infty}^\infty q^{2i^2-2ni} \nonumber \\
&=\frac{(-q;q^2)_\infty }{(q^2;q^2)_\infty}(S_0(q)+S_1(q)). \label{1-F-split}
\end{align}
Here $S_0(q)$ and $S_1(q)$ correspond to the sums with $n$ being even and odd, respectively.

We have
\begin{align}
S_0(q)&=\sum_{n=0}^\infty \frac{q^{2n^2}}{(q^2;q^2)_{2n}}\sum_{i=-\infty}^\infty q^{2(i-n)^2}=\sum_{n=0}^\infty \frac{q^{2n^2}}{(q^2;q^2)_{2n}}\sum_{i=-\infty}^\infty q^{2i^2} \nonumber \\
&=\frac{J_4^6J_{40}}{J_2^3J_8^2J_{8,40}}.  \quad \text{(by \eqref{S79} and \eqref{theta3-prod})} \label{1-S0-result}
\end{align}
Similarly, we have
\begin{align}
S_1(q)&=\sum_{n=0}^\infty \frac{q^{4n^2+4n+1}}{(q^2;q^2)_{2n+1}} \sum_{i=-\infty}^\infty q^{2i^2-4in-2i} =\sum_{n=0}^\infty \frac{q^{2n^2+2n+1}}{(q^2;q^2)_{2n+1}} \sum_{i=-\infty}^\infty q^{2(i-n)^2-2(i-n)} \nonumber \\
&=\sum_{n=0}^\infty \frac{q^{2n^2+2n+1}}{(q^2;q^2)_{2n+1}} \sum_{i=-\infty}^\infty q^{2i^2-2i} =2q\frac{J_8^2J_{6,20}J_{8,40}}{J_2J_4J_{40}}.  \quad \text{(by \eqref{S94} and \eqref{theta2-prod})}\label{1-S1-result}
\end{align}
Substituting \eqref{1-S0-result} and \eqref{1-S1-result} into \eqref{1-F-split}, we obtain \eqref{sum-id-1}.

(2) We have by \eqref{F-exp} that
\begin{align}
&\chi_0(1,1,q;q^2)=\sum_{i,j\geq 0} \frac{q^{2i^2+2j^2-2ij}}{(q^2;q^2)_i(q^2;q^2)_j}(-q^{2-2j};q^2)_\infty \nonumber \\
&=(-q^2;q^2)_\infty \sum_{i,j\geq 0} \frac{q^{2i^2+j^2+j-2ij}}{(q^2;q^2)_i(q^2;q^2)_j}(-1;q^2)_j \nonumber \\
&=(-q^2;q^2)_\infty \mathrm{CT}\left[ \sum_{i\geq 0} \frac{q^{i^2}z^i}{(q^2;q^2)_i} \sum_{j\geq 0} \frac{q^jz^{-j}(-1;q^2)_j}{(q^2;q^2)_j} \sum_{k=-\infty}^\infty z^{-k}q^{k^2}\right] \nonumber \\
&=(-q^2;q^2)_\infty \mathrm{CT} \left[ \frac{(-qz,-q/z,-q/z,-qz,q^2;q^2)_\infty}{(q/z;q^2)_\infty} \right] \quad (\text{by \eqref{Euler} and \eqref{Jacobi})}\label{2-F-int} \\
&=\frac{(-q^2;q^2)_\infty}{(q^2;q^2)_\infty} \mathrm{CT}\left[ \sum_{n=0}^\infty \frac{q^n}{(q^2;q^2)_n} \sum_{i=-\infty}^\infty z^{i}q^{i^2} \sum_{j=-\infty}^\infty z^jq^{j^2} \right] \quad \text{(by \eqref{Euler} and \eqref{Jacobi})} \nonumber \\
&=\frac{(-q^2;q^2)_\infty}{(q^2;q^2)_\infty} \sum_{n=0}^\infty \frac{q^n}{(q^2;q^2)_n} \sum_{i+j=n} q^{i^2+j^2}  \nonumber \\
&=\frac{(-q^2;q^2)_\infty}{(q^2;q^2)_\infty} \sum_{n=0}^\infty \frac{q^{n^2+n}}{(q^2;q^2)_n} \sum_{i=-\infty}^\infty q^{2i^2-2ni} \nonumber \\
&=\frac{(-q^2;q^2)_\infty}{(q^2;q^2)_\infty}(S_0(q)+S_1(q)). \label{2-F-split}
\end{align}
Here $S_0(q)$ and $S_1(q)$ correspond to the sums with $n$ being even and odd, respectively.

We have
\begin{align}
S_0(q)&=\sum_{n=0}^\infty \frac{q^{4n^2+2n}}{(q^2;q^2)_{2n}}\sum_{i=-\infty}^\infty q^{2i^2-4ni} =\sum_{n=0}^\infty \frac{q^{2n^2+2n}}{(q^2;q^2)_{2n}}\sum_{i=-\infty}^\infty q^{2(i-n)^2} \nonumber \\
&=\sum_{n=0}^\infty \frac{q^{2n^2+2n}}{(q^2;q^2)_{2n}}\sum_{i=-\infty}^\infty q^{2i^2} =\frac{J_4^5J_{2,20}J_{16,40}}{J_2^3J_8^2J_{40}}.  \quad \text{(by \eqref{S99} and \eqref{theta3-prod})}\label{2-S0-result}
\end{align}
Similarly,
\begin{align}
S_1(q)&=\sum_{n=0}^\infty \frac{q^{4n^2+6n+2}}{(q^2;q^2)_{2n+1}} \sum_{i=-\infty}^\infty q^{2i^2-4ni-2i} =\sum_{n=0}^\infty \frac{q^{2n^2+4n+2}}{(q^2;q^2)_{2n+1}} \sum_{i=-\infty}^\infty q^{2(n-i)^2+2(n-i)} \nonumber \\
&=\sum_{n=0}^\infty \frac{q^{2n^2+4n+2}}{(q^2;q^2)_{2n+1}} \sum_{i=-\infty}^\infty q^{2i^2+2i} =2q^2\frac{J_8^2J_{8,20}J_{4,40}}{J_2J_4J_{40}}. \quad \text{(by \eqref{Rogers-1} and \eqref{theta2-prod})} \label{2-S1-result}
\end{align}
Substituting \eqref{2-S0-result} and \eqref{2-S1-result} into \eqref{2-F-split}, we obtain \eqref{sum-id-2}.

(3) We have by \eqref{F-exp} that
\begin{align}
&\chi_0(q^2,q^{-2},q;q^2)=\sum_{i,j\geq 0} \frac{q^{2i^2+2j^2-2ij+2i-2j}}{(q^2;q^2)_i(q^2;q^2)_j}(-q^{2-2j};q^2)_\infty \nonumber \\
&=(-q^2;q^2)_\infty \sum_{i,j\geq 0} \frac{q^{2i^2+j^2-j-2ij+2i}}{(q^2;q^2)_i(q^2;q^2)_j}(-1;q^2)_j \nonumber \\
&=(-q^2;q^2)_\infty \mathrm{CT}\left[ \sum_{i\geq 0} \frac{q^{i^2+2i}z^i}{(q^2;q^2)_i} \sum_{j\geq 0} \frac{q^{-j}z^{-j}(-1;q^2)_j}{(q^2;q^2)_j} \sum_{k=-\infty}^\infty z^{-k}q^{k^2} \right] \nonumber \\
&=(-q^2;q^2)_\infty \mathrm{CT}\left[ \frac{(-q^3z,-1/(qz);q^2)_\infty (-qz,-q/z,q^2;q^2)_\infty}{(1/(qz);q^2)_\infty} \right] 
 \quad (\text{by \eqref{Euler} and \eqref{Jacobi})}\label{3-F-int} \\
&=\frac{(-q^2;q^2)_\infty}{(q^2;q^2)_\infty} \mathrm{CT}\left[ \frac{(-q^3z,-1/(qz),q^2;q^2)_\infty (-qz,-q/z,q^2;q^2)_\infty}{(1/(qz);q^2)_\infty} \right] \nonumber \\
&=\frac{(-q^2;q^2)_\infty}{(q^2;q^2)_\infty} \mathrm{CT}\left[ \sum_{n=0}^\infty \frac{q^{-n}z^{-n}}{(q^2;q^2)_n} \sum_{i=-\infty}^\infty q^{i^2+2i}z^i \sum_{j=-\infty}^\infty q^{j^2}z^{j} \right] \quad (\text{by \eqref{Euler} and \eqref{Jacobi})}\nonumber \\
&=\frac{(-q^2;q^2)_\infty}{(q^2;q^2)_\infty} \sum_{n=0}^\infty \frac{q^{-n}}{(q^2;q^2)_n} \sum_{i+j=n} q^{i^2+2i+j^2} \nonumber \\
&=\frac{(-q^2;q^2)_\infty}{(q^2;q^2)_\infty} \sum_{n=0}^\infty \frac{q^{n^2-n}}{(q^2;q^2)_n} \sum_{i=-\infty}^\infty q^{2i^2-2ni+2i} \nonumber \\
&=\frac{(-q^2;q^2)_\infty}{(q^2;q^2)_\infty} (S_0(q)+S_1(q)). \label{3-F-split}
\end{align}
Here $S_0(q)$ and $S_1(q)$ correspond to the sums with $n$ being even and odd, respectively.

We have
\begin{align}
S_0(q)&=\sum_{n=0}^\infty \frac{q^{4n^2-2n}}{(q^2;q^2)_{2n}} \sum_{i=-\infty}^\infty q^{2i^2-4ni+2i} =\sum_{n=0}^\infty \frac{q^{2n^2}}{(q^2;q^2)_{2n}} \sum_{i=-\infty}^\infty q^{2(i-n)^2+2(i-n)} \nonumber \\
&=\sum_{n=0}^\infty \frac{q^{2n^2}}{(q^2;q^2)_{2n}} \sum_{i=-\infty}^\infty q^{2i^2+2i}=2\frac{J_8^2J_{40}}{J_2J_{8,40}}. \quad \text{(by \eqref{S79} and \eqref{theta2-prod})}\label{3-S0}
\end{align}
Similarly,
\begin{align}
S_1(q)&=\sum_{n=0}^\infty \frac{q^{4n^2+2n}}{(q^2;q^2)_{2n+1}} \sum_{i=-\infty}^\infty q^{2i^2-4in}=\sum_{n=0}^\infty \frac{q^{2n^2+2n}}{(q^2;q^2)_{2n+1}} \sum_{i=-\infty}^\infty q^{2(i-n)^2} \nonumber \\
&=\sum_{n=0}^\infty \frac{q^{2n^2+2n}}{(q^2;q^2)_{2n+1}} \sum_{i=-\infty}^\infty q^{2i^2} =\frac{J_4^5J_{6,20}J_{8,40}}{J_2^3J_8^2J_{40}}.  \quad \text{(by \eqref{S94} and \eqref{theta3-prod})}\label{3-S1}
\end{align}
Substituting \eqref{3-S0} and \eqref{3-S1} into \eqref{3-F-split}, we obtain \eqref{sum-id-3}.

(4) We have by \eqref{F-exp} that
\begin{align}
&\chi_0(q^{-2},q^2,1;q^2)=\sum_{i,j\geq 0} \frac{q^{2i^2+2j^2-2ij-2i+2j}}{(q^2;q^2)_i(q^2;q^2)_j}(-q^{1-2j};q^2)_\infty \nonumber \\
&=(-q;q^2)_\infty \sum_{i,j\geq 0} \frac{q^{2i^2+j^2-2ij-2i+2j}}{(q^2;q^2)_i(q^2;q^2)_j}(-q;q^2)_j \nonumber \\
&=(-q;q^2)_\infty \mathrm{CT}\left[ \sum_{i\geq 0} \frac{q^{i^2-2i}z^i}{(q^2;q^2)_i} \sum_{j\geq 0} \frac{q^{2j}z^{-j}(-q;q^2)_j}{(q^2;q^2)_j} \sum_{k=-\infty}^\infty z^{-k}q^{k^2} \right] \nonumber \\
&=(-q;q^2)_\infty \mathrm{CT}\left[ \frac{(-z/q,-q^3/z,-qz,-q/z,q^2;q^2)_\infty}{(q^2/z;q^2)_\infty} \right] \quad (\text{by \eqref{Euler} and \eqref{Jacobi})}\nonumber \\
&=\frac{(-q;q^2)_\infty}{(q^2;q^2)_\infty} \mathrm{CT}\left[ \frac{(-z/q,-q^3/z,q^2;q^2)_\infty (-qz,-q/z,q^2;q^2)_\infty}{(q^2/z;q^2)_\infty} \right]\nonumber \\
&=\frac{(-q;q^2)_\infty}{(q^2;q^2)_\infty} \mathrm{CT}\left[ \sum_{n=0}^\infty \frac{q^{2n}z^{-n}}{(q^2;q^2)_n} \sum_{i=-\infty}^\infty z^{i} q^{i^2-2i} \sum_{j=-\infty}^\infty  z^jq^{j^2} \right] \quad \text{(by \eqref{Euler} and \eqref{Jacobi})} \nonumber \\
&=\frac{(-q;q^2)_\infty}{(q^2;q^2)_\infty}  \sum_{n=0}^\infty \frac{q^{2n}}{(q^2;q^2)_n} \sum_{i+j=n}q^{i^2+j^2-2i} \nonumber \\
&=\frac{(-q;q^2)_\infty}{(q^2;q^2)_\infty}  \sum_{n=0}^\infty \frac{q^{n^2+2n}}{(q^2;q^2)_n} \sum_{i=-\infty}^\infty q^{2i^2-2ni-2i} \nonumber \\
&=\frac{(-q;q^2)_\infty}{(q^2;q^2)_\infty} (S_0(q)+S_1(q)). \label{4-F-split}
\end{align}
Here $S_0(q)$ and $S_1(q)$ correspond to the sums with $n$ being even and odd, respectively.

We have
\begin{align}
S_0(q)&=\sum_{n=0}^\infty \frac{q^{4n^2+4n}}{(q^2;q^2)_{2n}}\sum_{i=-\infty}^\infty q^{2i^2-4in-2i} =\sum_{n=0}^\infty \frac{q^{2n^2+2n}}{(q^2;q^2)_{2n}} \sum_{i=-\infty}^\infty q^{2(n-i)^2+2(n-i)} \nonumber \\
&=\sum_{n=0}^\infty \frac{q^{2n^2+2n}}{(q^2;q^2)_{2n}} \sum_{i=-\infty}^\infty q^{2i^2+2i} =2\frac{J_8^2J_{2,20}J_{16,40}}{J_2J_4J_{40}}. \quad \text{(by \eqref{S99} and \eqref{theta2-prod})}\label{4-S0-result}
\end{align}
Similarly,
\begin{align}
S_1(q)&=\sum_{n=0}^\infty \frac{q^{4n^2+8n+3}}{(q^2;q^2)_{2n+1}}\sum_{i=-\infty}^\infty q^{2i^2-4ni-4i} =\sum_{n=0}^\infty \frac{q^{2n^2+4n+3}}{(q^2;q^2)_{2n+1}}\sum_{i=-\infty}^\infty q^{2(n-i)^2+4(n-i)} \nonumber \\
&=q\sum_{n=0}^\infty \frac{q^{2n^2+4n}}{(q^2;q^2)_{2n+1}}\sum_{i=-\infty}^\infty q^{2(i+1)^2} =q \frac{J_4^5J_{8,20}J_{4,40}}{J_2^3J_8^2J_{40}}.  \quad \text{(by \eqref{Rogers-1} and \eqref{theta3-prod})}\label{4-S1-result}
\end{align}
Substituting \eqref{4-S0-result} and \eqref{4-S1-result} into \eqref{4-F-split}, we obtain \eqref{sum-id-4}.

(5) We have by \eqref{F-exp} that
\begin{align}
&\chi_0(q^{-2},q^2,q^{-1};q^2)=\sum_{i,,j\geq 0} \frac{q^{2i^2+2j^2-2ij-2i+2j}}{(q^2;q^2)_i(q^2;q^2)_j}(-q^{-2j};q^2)_\infty \nonumber \\
&=(-1;q^2)_\infty \sum_{i,j\geq 0} \frac{q^{2i^2+j^2+j-2ij-2i}}{(q^2;q^2)_i(q^2;q^2)_j}(-q^2;q^2)_j \nonumber \\
&=2(-q^2;q^2)_\infty \mathrm{CT}\left[ \sum_{i\geq 0} \frac{q^{i^2-2i}z^i}{(q^2;q^2)_i} \sum_{j\geq 0} \frac{(-q^2;q^2)_jq^jz^{-j}}{(q^2;q^2)_j} \sum_{k=-\infty}^\infty z^{-k}q^{k^2} \right] \nonumber \\
&=2(-q^2;q^2)_\infty \mathrm{CT}\left[ \frac{(-q^3/z,-z/q,-qz,-q/z,q^2;q^2)_\infty}{(q/z;q^2)_\infty} \right]. \label{5-F-int}
\end{align}
Now if we replace $z$ by $q^2z$ in \eqref{5-F-int}, which does not change the result by \eqref{constant-id}, and then compare with \eqref{3-F-int}, after replacing $q^2$ by $ q$, we obtain \eqref{Nahm-relation-1}. 
In view of \eqref{sum-id-3}, we obtain \eqref{sum-id-5}.

(6) We have by \eqref{F-exp} that
\begin{align}
&\chi_0(q^{-4},q^4,q^{-1};q^2)=\sum_{i,j\geq 0} \frac{q^{2i^2+2j^2-2ij-4i+4j}}{(q^2;q^2)_i(q^2;q^2)_j} (-q^{-2j};q^2)_\infty \nonumber \\
&=(-1;q^2)_\infty \sum_{i,j\geq 0} \frac{q^{2i^2+j^2-2ij-4i+3j}}{(q^2;q^2)_i(q^2;q^2)_j}(-q^2;q^2)_j \nonumber \\
&=2(-q^2;q^2)_\infty \mathrm{CT}\left[ \sum_{i\geq 0} \frac{q^{i^2-4i}z^i}{(q^2;q^2)_i} \sum_{j\geq 0} \frac{q^{3j}z^{-j}(-q^2;q^2)_j}{(q^2;q^2)_j} \sum_{k=-\infty}^\infty z^{-k}q^{k^2} \right] \nonumber \\
&=2(-q^2;q^2)_\infty \mathrm{CT}\left[  \frac{(-q^{-3}z,-q^5/z,-qz,-q/z,q^2;q^2)_\infty}{(q^3/z;q^2)_\infty} \right] \quad \text{(by \eqref{Euler} and \eqref{Jacobi})} \nonumber \\
&=2(-q^2;q^2)_\infty \mathrm{CT}\left[  \frac{(-q^{-1}z,-q^3/z,-q^3z,-1/(qz),q^2;q^2)_\infty}{(q/z;q^2)_\infty} \right] \quad \text{(replace $z$ by $q^2z$)}  \nonumber \\
&=2(-q^2;q^2)_\infty \mathrm{CT}\left[  \frac{(1+q^{-1}z)(1+q^{-1}z^{-1})}{(1+qz)(1+qz^{-1})}\cdot \frac{(-qz,-q/z,-qz,-q/z,q^2;q^2)_\infty}{(q/z;q^2)_\infty} \right] \nonumber \\
&=2q^{-2}(-q^2;q^2)_\infty \mathrm{CT}\left[ \frac{(-qz,-q/z,-qz,-q/z,q^2;q^2)_\infty}{(q/z;q^2)_\infty} \right] . \label{6-F-int}
\end{align}
Note that \eqref{6-F-int} and \eqref{2-F-int} differ only by the factor $2q^{-2}$. After replacing $q^2$ by $q$, this proves \eqref{Nahm-relation-2}.
In view of \eqref{sum-id-2}, we obtain \eqref{sum-id-6}.
\end{proof}

\section{Proofs of Theorem \ref{main}}\label{sec-proofs}
In this section, we provide two different proofs for Theorem \ref{main}. In the first proof, we treat the functions $\tilde{F}_i(\tau)$ ($1\leq i\leq 6$) together by viewing them as vector-valued modular form. In the second proof, we treat these identities one by one.

\subsection{Theta functions} In this subsection, we shall make some preparations for the proofs. 
Recall the full modular group 
$$\mathrm{SL}(2,\mathbb{Z})=\left\{\begin{pmatrix} a & b \\ c & d \end{pmatrix}: a,b,c,d\in \mathbb{Z}, ad-bc=1\right\}.$$
This group is also conveniently denoted as $\Gamma(1)$. It is generated by the matrices
$$S=\begin{pmatrix}
    0 & -1 \\ 1 & 0
\end{pmatrix}, \quad T=\begin{pmatrix}
    1 & 1 \\ 0 & 1
\end{pmatrix}.$$
For any congruence subgroup $G$ of $\mathrm{SL}(2,\mathbb{Z})$ and Dirichlet character $\chi$, we use $M_k(G, \chi)$ (resp.\ $S_k(G,\chi)$) to denote the space of modular forms (resp.\ cusp forms) on $G$ with weight $k$ and multiplier $\chi$. When $\chi$ is trivial, we omit it and write the space as $M_k(G)$ (resp.\ $S_k(G)$). Besides $\Gamma(1)$ itself, we will mainly work with the congruence subgroups
\begin{align}
&\Gamma_1(N):=\left\{\begin{pmatrix} a & b \\ c & d  \end{pmatrix} \in \mathrm{SL}(2,\mathbb{Z}), \quad \begin{pmatrix} a & b \\ c & d \end{pmatrix} \equiv \begin{pmatrix} 1 & * \\ 0 & 1 \end{pmatrix} \pmod{N}\right\}, \\
&\Gamma_0(N):=\left\{\begin{pmatrix} a & b \\ c & d  \end{pmatrix} \in \mathrm{SL}(2,\mathbb{Z}), \quad \begin{pmatrix} a & b \\ c & d \end{pmatrix} \equiv \begin{pmatrix} * & * \\ 0 & * \end{pmatrix} \pmod{N}\right\}.
\end{align}

It is well-known that the Weber modular functions $\mathfrak{f}(\tau), \mathfrak{f}_1(\tau)$ and $\mathfrak{f}_2(\tau)$ defined in \eqref{Weber-defn} transform as a vector-valued modular form of rank three under $\Gamma(1)$:
$$\mathfrak{f}(-1/\tau) = \mathfrak{f}(\tau), \ \ \ \mathfrak{f}_2(-1/\tau) = \frac{1}{\sqrt{2}}  \mathfrak{f}_1(\tau), \ \ \mathfrak{f}_1(-1/\tau)=\sqrt{2} \mathfrak{f}_2(\tau)$$
$$\mathfrak{f}(\tau+1) =e^{-\pi i /24}  \mathfrak{f}_1(\tau), \ \ \mathfrak{f}_1(\tau+1) = e^{-\pi i/24} \mathfrak{f}(\tau),  \ \ \mathfrak{f}_2(\tau+1) = e^{\pi i/12} \mathfrak{f}_2(\tau).$$
We also require the Dedekind $\eta$-functions $\eta(\tau)$ and its transformation properties:
$$\eta(-1/\tau)=\sqrt{-i \tau} \eta(\tau), \ \ \eta(\tau+1)=e^{\pi i  /12} \eta(\tau).$$

Recall the series $(\partial \Theta)_{j,k}(\tau)$ and $(\partial G)_{j,k}(\tau)$ defined in \eqref{partial-Theta-defn} and \eqref{partial-G-defn}.  The following lemma summarizes some useful properties and especially some transformation formulas for them.
\begin{lemma}
\begin{enumerate}[(1)]
\item For any $k>0$ and $j$ we have 
\begin{align}
&(\partial \Theta)_{0,k}(\tau)=0, \quad (\partial \Theta)_{k,k}(\tau)=0, \label{Theta-zero} \\
&(\partial \Theta)_{j,\frac{k}{2}}(\tau)=-(\partial \Theta)_{k-j,\frac{k}{2}}(\tau), \ \ \ (\partial G)_{j,\frac{k}{2}}(\tau)=(\partial G)_{k-j,\frac{k}{2}}(\tau), \label{symmetry} \\
&(\partial \Theta)_{j,k}(\tau)=\frac{1}{2}(\partial \Theta)_{2j,4k}(\tau)+\frac{1}{2}(\partial \Theta)_{2j+4k,4k}(\tau), \label{Theta-dissection} \\
&(\partial G)_{j,k}(\tau)=\frac{1}{2}(\partial \Theta)_{2j,4k}(\tau)-\frac{1}{2}(\partial \Theta)_{2j+4k,4k}(\tau). \label{G-dissection}
\end{align}
\item For $k\in \mathbb{N}+\frac{1}{2}$ and $j\in \mathbb{N}$ we have
\begin{align}
& (\partial \Theta)_{j,k}(\tau+1)=e^{i \pi j^2/2k} (\partial G)_{j,k}(\tau),  \label{Theta-T}\\
&
 (\partial G)_{j,k}(\tau+1)=e^{i \pi j^2/2k}(\partial \Theta)_{j,k}(\tau), \label{G-T}  \\
& (\partial \Theta)_{j,k}(\tau+2)=e^{i \pi j^2/k}  (\partial \Theta)_{j,k}(\tau), \label{Theta-T-twice} \\
&
 (\partial G)_{j,k}(\tau+2)=e^{i \pi j^2/k}(\partial G)_{j,k}(\tau). \label{Theta-G-twice}
\end{align}
For $k\in \mathbb{N}+\frac{1}{2}$ and $j\in \mathbb{N}+\frac{1}{2}$ we have
\begin{align}
& (\partial \Theta)_{j,k}(\tau+1)=e^{i \pi j^2/2k}  (\partial \Theta)_{j,k}(\tau), \\
& (\partial G)_{j,k}(\tau+1)=e^{i \pi j^2/2k}  (\partial G)_{j,k}(\tau).
\end{align}
\item For $k\in \frac{1}{2}\mathbb{N}$ and $j\in \mathbb{N}$ we have
\begin{align}
 (\partial \Theta)_{j,k}(-1/\tau)&=(-\tau) \sqrt{-i \tau/2k} \sum_{j'=1}^{2k-1} e^{i \pi j j'/k} (\partial \Theta)_{j',k} (\tau), \label{Theta-mod} \\
  (\partial G)_{j,k}(-1/{\tau})&=(-\tau)\sqrt{-i\tau/2k} \sum_{j'=1}^{2k} e^{i\pi j(2j'-1)/k}(\partial \Theta)_{\frac{2j'-1}{2},k}(\tau). \label{theta-g-T}
\end{align}
\item For $k\in \frac{1}{2}\mathbb{N}$ and $j\in \mathbb{N}+\frac{1}{2}$ we have
\begin{align}
(\partial \Theta)_{j,k}(-1/\tau)&=(-\tau) \sqrt{-i \tau/2k} \sum_{j'=1}^{2k-1} e^{i \pi j j'/k} (\partial G)_{j',k} (\tau), \label{theta-g}  \\
  (\partial G)_{j,k}(-1/{\tau})&=(-\tau)\sqrt{-i\tau/2k} \sum_{j'=1}^{2k} e^{i\pi j(2j'-1)/k}(\partial G)_{\frac{2j'-1}{2},k}(\tau). \label{theta-g-T-second}
\end{align}
\end{enumerate}
\end{lemma}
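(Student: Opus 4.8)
The plan is to prove the four parts in increasing order of difficulty. Parts (1) and (2) are purely elementary rearrangements of the defining series \eqref{partial-Theta-defn} and \eqref{partial-G-defn}, while parts (3) and (4) rest on Poisson summation, which is where essentially all the content lies.

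For part (1) I would read off each identity from a single change of summation variable. The vanishing relations \eqref{Theta-zero} hold because the involution $n\mapsto -n$ (when $j=0$) and $n\mapsto -n-1$ (when $j=k$) fixes the exponent $(2kn+j)^2/(4k)$ while negating the linear factor $2kn+j$, so each series is odd and hence vanishes. The same involution $n\mapsto -n-1$ applied to the $(k-j)$-series sends its summand to $\mp$ the corresponding summand of the $j$-series, giving the symmetry relations \eqref{symmetry}; the sign discrepancy between $(\partial\Theta)$ and $(\partial G)$ is exactly the factor $(-1)^{-n-1}=-(-1)^n$. Finally, \eqref{Theta-dissection} and \eqref{G-dissection} come from splitting the sum over $n$ into even $n=2m$ and odd $n=2m+1$: a direct substitution shows $\tfrac12(\partial\Theta)_{2j,4k}$ reproduces the even part and $\tfrac12(\partial\Theta)_{2j+4k,4k}$ the odd part, after which adding (resp.\ subtracting) recovers $(\partial\Theta)$ (resp.\ $(\partial G)$), since the weight $(-1)^n$ is $+1$ on evens and $-1$ on odds.

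For part (2) I would expand $(2kn+j)^2=4k^2n^2+4knj+j^2$, so that under $\tau\mapsto\tau+1$ each summand acquires the phase $e^{\pi i(2kn^2+2nj)}e^{\pi i j^2/(2k)}$. Since $2k$ is an odd integer for $k\in\mathbb{N}+\tfrac12$ and $n^2\equiv n\pmod 2$, the factor $e^{2\pi i k n^2}$ collapses to $(-1)^n$, while $e^{2\pi i nj}$ equals $1$ for $j\in\mathbb{N}$ and $(-1)^n$ for $j\in\mathbb{N}+\tfrac12$. Thus the $n$-dependent phase is $(-1)^n$ when $j$ is an integer and $1$ when $j$ is a half-integer, times the global constant $e^{\pi i j^2/(2k)}$; this inserts or removes the $(-1)^n$ that distinguishes $(\partial\Theta)$ from $(\partial G)$, yielding \eqref{Theta-T}--\eqref{Theta-G-twice} and the two half-integer formulas. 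The $\tau\mapsto\tau+2$ statements are identical with the $n$-dependent phase now trivial.

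Parts (3) and (4) are the main obstacle and I would handle them by Poisson summation. Write $(\partial\Theta)_{j,k}(-1/\tau)=\sum_n f(n)$ with $f(x)=(2kx+j)e^{-\pi i(2kx+j)^2/(2k\tau)}$, and insert an extra factor $e^{i\pi x}$ (encoding $(-1)^n$) for the $(\partial G)$ version. Using the Fourier transform $\widehat{x\,e^{-\pi a x^2}}(\xi)=-i\,a^{-3/2}\xi\,e^{-\pi\xi^2/a}$ with $a=i/(2k\tau)$ after the substitution $u=2kx+j$, each $\hat f(m)$ produces the theta exponent $q^{m^2/(4k)}$ (resp.\ a half-integral index coming from the $e^{i\pi x}$ shift), a linear factor in $m$, a phase $e^{\pi i jm/k}$, and a prefactor that assembles from the $a^{-3/2}\sim\tau^{3/2}$ scaling to exactly $(-\tau)\sqrt{-i\tau/(2k)}$. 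Reindexing the dual sum by residues $m=2k\ell+j'$ factors the phase as $e^{2\pi i\ell j}e^{\pi i jj'/k}$, and the inner sum over $\ell$ becomes $(\partial\Theta)_{j',k}$ when $e^{2\pi i\ell j}=1$ (i.e.\ $j\in\mathbb{N}$) and $(\partial G)_{j',k}$ when $e^{2\pi i\ell j}=(-1)^\ell$ (i.e.\ $j\in\mathbb{N}+\tfrac12$); this is precisely the $\partial\Theta$-versus-$\partial G$ dichotomy separating \eqref{Theta-mod}/\eqref{theta-g-T} from \eqref{theta-g}/\eqref{theta-g-T-second}. The residue $j'=0$ drops out because $(\partial\Theta)_{0,k}=0$ by \eqref{Theta-zero}, matching the stated summation ranges, and the $e^{i\pi x}$ frequency shift by $\tfrac12$ accounts for the half-integral lower index $\frac{2j'-1}{2}$ and the range $1\le j'\le 2k$ in the $(\partial G)$-source cases. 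The delicate points are pinning down the branch of $a^{-3/2}=(i/(2k\tau))^{-3/2}$ so that the prefactor is exactly $(-\tau)\sqrt{-i\tau/(2k)}$ (cleanest by first verifying on the imaginary axis $\tau=it$ and then analytically continuing to $\mathbb{H}$, where $\mathrm{Im}\,\tau>0$ guarantees convergence) and the half-integral residue bookkeeping; the applicability of Poisson summation itself is routine from the Gaussian decay.
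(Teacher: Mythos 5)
Your proposal is correct, but on the substantive parts (3)--(4) it takes a genuinely different route from the paper. Parts (1) and (2) coincide with the paper's treatment: the same involutions $n\mapsto -n$ and $n\mapsto -n-1$ give \eqref{Theta-zero} and \eqref{symmetry}, the same even/odd split gives \eqref{Theta-dissection}--\eqref{G-dissection}, and your phase computation for part (2) is exactly what the paper compresses into ``follows from definition.'' For parts (3)--(4), however, the paper performs no Poisson summation at all: it quotes the base transformation \eqref{Theta-mod} (integer $j$, source $\partial\Theta$) from Shimura's paper, and then obtains \eqref{theta-g-T}, \eqref{theta-g} and \eqref{theta-g-T-second} purely algebraically --- applying \eqref{Theta-mod} with $k$ replaced by $4k$ to the dissection identities \eqref{Theta-dissection}--\eqref{G-dissection}, sorting the dual index $j'$ by parity via factors like $1\pm e^{i\pi j'}$, folding the summation range using \eqref{Theta-zero}, and reassembling half-integer-index series through \eqref{Theta-dissection}--\eqref{G-dissection} again. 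Your approach instead proves all four transformation formulas, \emph{including} the cited base case \eqref{Theta-mod}, by one uniform Poisson-summation computation in which the sign $(-1)^n$ is a frequency shift by $\tfrac12$ and the integrality of $j$ decides, through the factor $e^{2\pi i j\ell}$, whether the dual residue classes reassemble into $\partial\Theta$ or $\partial G$; this buys self-containedness (the paper's proof is not self-contained, resting on Shimura's formula) at the price of the analytic bookkeeping --- the branch of $a^{-3/2}$ and the justification of Poisson summation --- which you correctly identify and resolve by verifying on the imaginary axis and continuing analytically. One point you should not be troubled by: your computation produces the phase $e^{i\pi j(2j'-1)/(2k)}$ in \eqref{theta-g-T} and \eqref{theta-g-T-second}, whereas the printed statement has $e^{i\pi j(2j'-1)/k}$. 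Your version is the correct one and agrees with the paper's own displayed derivation, whose penultimate line carries $e^{i\pi j(2j'-1)/2k}$; the printed exponent is a typo, as one can check directly at $k=\tfrac12$ with $j$ odd, where the printed phase $e^{2\pi i j}=1$ would force a false identity while the factor $e^{i\pi j}=(-1)^j$ makes both sides match. So the apparent discrepancy between your conclusion and the lemma as stated is a defect of the statement, not of your argument.
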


\begin{proof}
Replacing $n$ by $-n$ in \eqref{partial-Theta-defn}, we deduce that $(\partial \Theta)_{0,k}(\tau)=0$. Replacing $n$ by $-n-1$, we obtain $(\partial \Theta)_{k,k}(\tau)=0$. This proves \eqref{Theta-zero}. In the same way we can prove \eqref{symmetry}.

Splitting the sum according to the parity of $n$, we have
\begin{align*}
&(\partial \Theta)_{j,k}(\tau)=\sum_{n\in \mathbb{Z}}(4kn+j)q^{(4kn+j)^2/4k}+\sum_{n\in \mathbb{Z}} (4kn+2k+j)q^{(4kn+2k+j)^2/(4k)} \nonumber \\
&=\frac{1}{2}\sum_{n\in \mathbb{Z}}(8kn+2j)q^{(8kn+2j)^2/(16k)}+\frac{1}{2}\sum_{n\in \mathbb{Z}}(8kn+4k+2j)q^{(8kn+4k+2j)^2/(16k)} \nonumber \\
&=\frac{1}{2}(\partial \Theta)_{2j,4k}(\tau)+\frac{1}{2}(\partial \Theta)_{2j+4k,4k}(\tau).
\end{align*}
This proves \eqref{Theta-dissection}. Similarly, we can prove \eqref{G-dissection} and hence finish the proof of part (1).

Part (2) follows from definition. 

It remains to prove parts (3) and (4). First,  the formula \eqref{Theta-mod} follows from \cite[Eq.\ (2.4)]{Shimura} and the fact $(\partial \Theta)_{0,k}(\tau)=0$ (see \eqref{Theta-zero}). It also appears as \cite[Eq.\ (12.8)]{AM}.

Now assume that $k\in \frac{1}{2}\mathbb{N}$ and $j\in \mathbb{N}+\frac{1}{2}$. Replacing $\tau$ by $-1/\tau$ in \eqref{Theta-dissection} and using \eqref{Theta-mod}, we deduce that
\begin{align*}
  &  (\partial \Theta)_{j,k}(-1/\tau)=\frac{1}{2}(-\tau)\sqrt{\frac{-i\tau}{8k}}\sum_{j'=1}^{8k-1}e^{i\pi jj'/2k}(\partial \Theta)_{j',4k}(\tau) \nonumber \\
&\quad    +\frac{1}{2}(-\tau)\sqrt{\frac{-i\tau}{8k}}\sum_{j'=1}^{8k-1}e^{i\pi (2k+j)j'/2k}(\partial \Theta)_{j',4k}(\tau) \nonumber \\
&=-\frac{1}{4}\tau\sqrt{\frac{-i\tau}{2k}}\sum_{j'=1}^{8k-1}(1+e^{i\pi j'})e^{i\pi jj'/2k}(\partial \Theta)_{j',4k}(\tau) \nonumber \\
&=-\frac{1}{2}\tau \sqrt{\frac{-i\tau}{2k}}\sum_{j'=1}^{4k-1}e^{i\pi jj'/k}(\partial \Theta)_{2j',4k}(\tau) \\
&=-\frac{1}{2}\tau \sqrt{\frac{-i\tau}{2k}} \sum_{j'=1}^{2k-1} \left(e^{i\pi jj'/k}(\partial \Theta)_{2j',4k}(\tau)+e^{i\pi j(j'+2k)/k})(\partial \Theta)_{2j'+4k,4k}(\tau)\right)\nonumber \\
&=-\frac{1}{2}\tau \sqrt{\frac{-i\tau}{2k}} \sum_{j'=1}^{2k-1} e^{i\pi jj'/k}\Big((\partial \Theta)_{2j',4k}(\tau)-(\partial \Theta)_{2j'+4k,4k}(\tau)\Big)\nonumber \\
&=-\tau \sqrt{\frac{-i\tau}{2k}} \sum_{j'=1}^{2k-1} e^{i\pi jj'/k} (\partial G)_{j,k}(\tau).
\end{align*}
Here for the last third line we used the fact that $(\partial \Theta)_{4k,4k}(\tau)=0$ (see \eqref{Theta-zero}). This proves \eqref{theta-g}. 

For $k\in \frac{1}{2}\mathbb{N}$ and $j\in \frac{1}{2}\mathbb{N}$, replacing $\tau$ by $-1/\tau$ in \eqref{G-dissection} and using \eqref{Theta-mod}, we deduce that
\begin{align*}
   & (\partial G)_{j,k}(-1/\tau)=\frac{1}{2}(-\tau)\sqrt{\frac{-i\tau}{8k}}\sum_{j'=1}^{8k-1}(e^{i\pi jj'/2k}-e^{i\pi (j+2k)j'/2k})(\partial \Theta)_{j',4k}(\tau) \nonumber \\
&=(-\tau)\sqrt{\frac{-i\tau}{8k}}\sum_{j'=1}^{4k}e^{i\pi j(2j'-1)/2k}(\partial \Theta)_{2j'-1,4k}(\tau) \nonumber \\
&=(-\tau)\sqrt{\frac{-i\tau}{8k}} \sum_{j'=1}^{2k} \left(e^{i\pi j(2j'-1)/2k}(\partial \Theta)_{2j'-1,4k}(\tau)+e^{i\pi j(2j'+4k-1)/2k}(\partial \Theta)_{2j'+4k-1,4k}(\tau) \right) \nonumber \\
&=(-\tau)\sqrt{\frac{-i\tau}{8k}} \sum_{j'=1}^{2k} e^{i\pi j(2j'-1)/2k} \Big( (\partial \Theta)_{2j'-1,4k}(\tau)+e^{2i\pi j}(\partial \Theta)_{2j'+4k-1,4k}(\tau) \Big).
\end{align*}
Discussing according to  $j\in \mathbb{N}$ or  $j\in \mathbb{N}+\frac{1}{2}$ and using \eqref{Theta-dissection}--\eqref{G-dissection}, we obtain \eqref{theta-g-T} and \eqref{theta-g-T-second}, respectively.
\end{proof}

Combining these formulas give
\begin{prop} Let $k$ be a positive integer. Then the following functions:
\begin{align*}
&  \frac{\mathfrak{f}(\tau)^3}{\eta(\tau)^3} (\partial \Theta)_{i,\frac{2k+1}{2}}(\tau), \quad 1 \leq i \leq k, \\
&  \frac{\mathfrak{f}_1(\tau)^3}{\eta(\tau)^3} (\partial G)_{i,\frac{2k+1}{2}}(\tau), \quad 1 \leq i \leq k, \\
&  \frac{\mathfrak{f}_2(\tau)^3}{\eta(\tau)^3} (\partial  \Theta)_{\frac{2i-1}{2},\frac{2k+1}{2}}(\tau), \quad 1 \leq i \leq k,
\end{align*}
combine into a vector-valued modular function under $\Gamma(1)$.
 \end{prop}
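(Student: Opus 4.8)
The plan is to verify invariance directly on the two generators $S$ and $T$ of $\mathrm{SL}(2,\mathbb{Z})$, exhibiting in each case an explicit $3k\times 3k$ matrix that sends the column vector of the $3k$ listed functions to itself. Write $\kappa=\frac{2k+1}{2}$ for the common second index, and abbreviate the three families as $A_i=\frac{\mathfrak{f}^3}{\eta^3}(\partial\Theta)_{i,\kappa}$ and $B_i=\frac{\mathfrak{f}_1^3}{\eta^3}(\partial G)_{i,\kappa}$ (both with integer index $1\le i\le k$), and $C_i=\frac{\mathfrak{f}_2^3}{\eta^3}(\partial\Theta)_{\frac{2i-1}{2},\kappa}$ (half-integer index). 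Each factor $\frac{\mathfrak{f}^3}{\eta^3}$, $\frac{\mathfrak{f}_1^3}{\eta^3}$, $\frac{\mathfrak{f}_2^3}{\eta^3}$ has weight $-\frac32$ while $(\partial\Theta)_{j,\kappa}$ and $(\partial G)_{j,\kappa}$ have weight $\frac32$, so all $3k$ functions have weight $0$; the task is to show the span is closed under $S$ and $T$.

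For the $T$-action I would cube the Weber transforms $\mathfrak{f}(\tau+1)=e^{-\pi i/24}\mathfrak{f}_1$, $\mathfrak{f}_1(\tau+1)=e^{-\pi i/24}\mathfrak{f}$, $\mathfrak{f}_2(\tau+1)=e^{\pi i/12}\mathfrak{f}_2$ and combine them with $\eta(\tau+1)=e^{\pi i/12}\eta$ and Lemma part (2). Since $\kappa\in\mathbb{N}+\frac12$, formulas \eqref{Theta-T}--\eqref{G-T} swap $(\partial\Theta)_{i,\kappa}\leftrightarrow(\partial G)_{i,\kappa}$ for integer $i$, whereas the two half-integer $T$-formulas merely multiply $(\partial\Theta)_{\frac{2i-1}{2},\kappa}$ by a root of unity. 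Tracking the phases then yields $A_i\mapsto(\text{phase})\,B_i$, $B_i\mapsto(\text{phase})\,A_i$, and $C_i\mapsto(\text{phase})\,C_i$, so the $T$-matrix interchanges the $A$- and $B$-blocks and is diagonal on the $C$-block.

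For the $S$-action the decisive point is that the way the Weber cubes permute under $\tau\mapsto-1/\tau$ matches exactly the change of index-type (integer versus half-integer) produced by the theta transformation formulas. Using $\mathfrak{f}(-1/\tau)=\mathfrak{f}$, $\mathfrak{f}_1(-1/\tau)=\sqrt2\,\mathfrak{f}_2$, $\mathfrak{f}_2(-1/\tau)=\frac1{\sqrt2}\mathfrak{f}_1$ and $\eta(-1/\tau)=\sqrt{-i\tau}\,\eta$, the automorphy factor $(-i\tau)^{3/2}$ coming from $\eta^{-3}$ cancels against the prefactor $(-\tau)\sqrt{-i\tau/2\kappa}$ in \eqref{Theta-mod}, \eqref{theta-g-T}, \eqref{theta-g}, leaving the $\tau$-independent scalar $-i/\sqrt{2\kappa}$ times an appropriate power of $\sqrt2$. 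Concretely: \eqref{Theta-mod} sends integer-index $(\partial\Theta)$ to a sum of integer-index $(\partial\Theta)$, so the $A$-block is $S$-stable; \eqref{theta-g-T} sends integer-index $(\partial G)$ to half-integer-index $(\partial\Theta)$, and since $\mathfrak{f}_1^3\mapsto2\sqrt2\,\mathfrak{f}_2^3$ this carries the $B$-block into the $C$-block; \eqref{theta-g} sends half-integer-index $(\partial\Theta)$ to integer-index $(\partial G)$, and since $\mathfrak{f}_2^3\mapsto\frac1{2\sqrt2}\mathfrak{f}_1^3$ this carries the $C$-block into the $B$-block.

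The one genuinely delicate point I would treat carefully is that the theta sums in \eqref{Theta-mod}, \eqref{theta-g-T}, \eqref{theta-g} run over $j'=1,\dots,2\kappa-1=2k$ or $j'=1,\dots,2\kappa=2k+1$, whereas each block has only $k$ basis vectors. These longer sums must be folded onto the representatives $1\le i\le k$ using the reflection symmetries \eqref{symmetry}, namely $(\partial\Theta)_{j,\kappa}=-(\partial\Theta)_{2\kappa-j,\kappa}$ and $(\partial G)_{j,\kappa}=(\partial G)_{2\kappa-j,\kappa}$, together with the vanishing $(\partial\Theta)_{\kappa,\kappa}=0$ from \eqref{Theta-zero} that removes the central ($j'=k+1$) term in the half-integer sum of \eqref{theta-g-T}. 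After this folding, each $S$-image is a genuine $\mathbb{C}$-linear combination of $A_1,\dots,A_k$, of $B_1,\dots,B_k$, or of $C_1,\dots,C_k$. Since $S$ and $T$ generate $\mathrm{SL}(2,\mathbb{Z})$ and both send every component of the vector into the span of the $3k$ components, the vector is a vector-valued modular function; the group relations need not be checked separately, as the transformation laws arise from honest functional equations and therefore compose consistently.
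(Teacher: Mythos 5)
Your proposal is correct and follows essentially the same route as the paper: observe all components have weight zero, then check that the span closes under the generators $S$ and $T$ using the Weber/eta transformation laws together with \eqref{Theta-mod}, \eqref{theta-g-T}, \eqref{theta-g} and the folding symmetries \eqref{symmetry}, \eqref{Theta-zero}. The only difference is that you spell out the index-folding and the cancellation of automorphy factors, which the paper leaves implicit.
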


\begin{proof} First notice that all components are modular functions on an appropriate congruence subgroup (see \cite{Bringmann} for instance). Thus it suffices to prove that the span of the functions (which are linearly independent) closes under $\tau \to \tau+1$ and $\tau \to -\frac{1}{\tau}$.
For $\tau \to \tau+1$ this is clear and follows immediately from the formulas above. Under $\tau \to -1/\tau$ we see using \eqref{Theta-mod} and \eqref{symmetry} that the span of $\frac{\mathfrak{f}^3(\tau)}{\eta^3(\tau)} (\partial \Theta)_{i,\frac{2k+1}{2}}(\tau)$ transforms to itself. Similarly, formulas $\mathfrak{f}_2(-1/\tau) = \frac{1}{\sqrt{2}}  \mathfrak{f}_1(\tau)$, \eqref{symmetry}, \eqref{theta-g-T} and \eqref{theta-g} show that the span of
$ \frac{\mathfrak{f}_2^3(\tau)}{\eta^3(\tau)} (\partial  \Theta)_{\frac{2i-1}{2},\frac{2k+1}{2}}(\tau)$ transforms under $\tau \to -1/\tau$ to the span of  $\frac{\mathfrak{f}_1^3(\tau)}{\eta^3(\tau)} (\partial G)_{i,\frac{2k+1}{2}}(\tau)$ and vice-versa. We proved the assertion.
\end{proof}

As a corollary we record
\begin{corollary} \label{gam}
(1) The vector-valued function 
$$\Big(\frac{\mathfrak{f}(\tau)^3}{\eta(\tau)^3} (\partial \Theta)_{1,\frac52}(\tau), \frac{\mathfrak{f}(\tau)^3}{\eta(\tau)^3} (\partial \Theta)_{2,\frac52}(\tau)\Big)$$
transforms as a vector-valued modular form (of weight zero) for $\Gamma_\theta=\langle S, T^2\rangle$, the sugbroup of $\Gamma(1)$ generated by $S$ and $T^2$.

(2) The vector-valued function
$$\biggl( \frac{\mathfrak{f}(\tau)^3}{\eta(\tau)^3} (\partial \Theta)_{1,\frac52}(\tau), \frac{\mathfrak{f}(\tau)^3}{\eta(\tau)^3} (\partial \Theta)_{2,\frac52}(\tau),\frac{\mathfrak{f}_1(\tau)^3}{\eta(\tau)^3} (\partial G)_{1,\frac52}(\tau), $$
$$\frac{\mathfrak{f}_1(\tau)^3}{\eta(\tau)^3} (\partial G)_{2,\frac52}(\tau),\frac{\mathfrak{f}_2(\tau)^3}{\eta(\tau)^3} (\partial \Theta)_{\frac12,\frac52}(\tau), \frac{\mathfrak{f}_2(\tau)^3}{\eta(\tau)^3} (\partial \Theta)_{\frac32,\frac52}(\tau)\biggr)$$
transforms as a vector-valued modular function for $\Gamma(1)$.
\end{corollary}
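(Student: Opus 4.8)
The plan is to derive both parts directly from the Proposition above together with the transformation formulas collected in the Lemma. Part (2) is nothing more than the specialization of the Proposition to $k=2$: then $\frac{2k+1}{2}=\frac52$, the index range $1\le i\le k$ becomes $i\in\{1,2\}$, and the three listed families become $\frac{\mathfrak{f}^3}{\eta^3}(\partial\Theta)_{i,\frac52}$, $\frac{\mathfrak{f}_1^3}{\eta^3}(\partial G)_{i,\frac52}$ for $i=1,2$, and $\frac{\mathfrak{f}_2^3}{\eta^3}(\partial\Theta)_{\frac{2i-1}{2},\frac52}$ for $i=1,2$, i.e.\ $(\partial\Theta)_{\frac12,\frac52}$ and $(\partial\Theta)_{\frac32,\frac52}$. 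These are exactly the six components displayed in part (2), so the Proposition immediately gives the $\Gamma(1)$ statement.

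For part (1), recall that $\Gamma_\theta=\langle S,T^2\rangle$, so it suffices to show that the span $V_0$ of $\frac{\mathfrak{f}^3}{\eta^3}(\partial\Theta)_{1,\frac52}$ and $\frac{\mathfrak{f}^3}{\eta^3}(\partial\Theta)_{2,\frac52}$ is preserved by $\tau\mapsto\tau+2$ and $\tau\mapsto-1/\tau$. Under $T^2$ I would use \eqref{Theta-T-twice} with $k=\frac52$, which gives $(\partial\Theta)_{j,\frac52}(\tau+2)=e^{2\pi i j^2/5}(\partial\Theta)_{j,\frac52}(\tau)$, together with $\mathfrak{f}(\tau+2)=e^{-\pi i/12}\mathfrak{f}(\tau)$ and $\eta(\tau+2)=e^{\pi i/6}\eta(\tau)$ (both read off from the transformation data for $\mathfrak{f},\mathfrak{f}_1$ and $\eta$). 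Consequently $T^2$ acts on $V_0$ diagonally, and $V_0$ is $T^2$-stable.

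Under $S$ I would apply \eqref{Theta-mod} with $k=\frac52$ and $j\in\{1,2\}$; the resulting sum runs over $j'=1,\dots,4$, and the two terms with $j'=3,4$ are folded back by the symmetry \eqref{symmetry}, which for second index $\frac52$ reads $(\partial\Theta)_{5-j',\frac52}=-(\partial\Theta)_{j',\frac52}$, so that $(\partial\Theta)_{3,\frac52}$ and $(\partial\Theta)_{4,\frac52}$ collapse onto $-(\partial\Theta)_{2,\frac52}$ and $-(\partial\Theta)_{1,\frac52}$. Combining this with $\mathfrak{f}(-1/\tau)=\mathfrak{f}(\tau)$ and $\eta(-1/\tau)=\sqrt{-i\tau}\,\eta(\tau)$, the automorphy factor $(-\tau)\sqrt{-i\tau/5}$ from \eqref{Theta-mod} cancels against the $(-i\tau)^{-3/2}$ produced by $\eta^3$, leaving a constant (one computes $\tfrac{-i}{\sqrt5}$). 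Thus $S$ acts on $V_0$ by a constant, non-diagonal $2\times2$ matrix of weight zero, and $V_0$ is $S$-stable. Since each component is already a modular function on a suitable congruence subgroup (as in the proof of the Proposition), stability under the generators $S$ and $T^2$ upgrades $V_0$ to a weight-zero vector-valued modular form on $\Gamma_\theta$.

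The step requiring the most care is the $S$-transformation: one must check that the fold via \eqref{symmetry} genuinely returns every term to the two-dimensional span $V_0$, and that the half-integral powers of $-i\tau$ from $\eta^3$ and from the prefactor in \eqref{Theta-mod} conspire to a true weight-zero cocycle. This is also the conceptual reason why $\Gamma_\theta$, rather than all of $\Gamma(1)$, is the natural group here: under a single $T$ the formula \eqref{Theta-T} sends each $(\partial\Theta)_{j,\frac52}$ to a multiple of $(\partial G)_{j,\frac52}$ while $\mathfrak{f}$ is carried to $\mathfrak{f}_1$, so $V_0$ is not $T$-stable and one is forced into the full six-dimensional space of part (2); only $T^2$ together with $S$ preserve the smaller space $V_0$.
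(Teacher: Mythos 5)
Your proposal is correct and follows essentially the same route as the paper: part (2) is the $k=2$ case of the preceding Proposition, and part (1) is obtained by checking stability of the two-dimensional span under the generators $S$ (via \eqref{Theta-mod} and the folding symmetry \eqref{symmetry}, with the automorphy factors cancelling to $-i/\sqrt5$) and $T^2$ (diagonal action via \eqref{Theta-T-twice}). Your closing observation about why only $T^2$, and not $T$, preserves the smaller span matches the structure of the paper's argument for the Proposition, where $T$ interchanges the $\mathfrak{f}$-family with the $\mathfrak{f}_1$-family.
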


Part (2) of this corollary proves the second assertion in Theorem \ref{main}. Therefore, to finish the proof of Theorem \ref{main}, it suffices to prove the six identities.
Below we present two different proofs.

\subsection{First proof of Theorem \ref{main}}
We first recall several known facts. Recall the theta functions $\theta_2(\tau)$ and $\theta_3(\tau)$ defined in \eqref{theta2-prod} and \eqref{theta3-prod}. The pair
\begin{equation}
\label{sl2}
(W_1(\tau),W_2(\tau)):=\left(\dfrac{\theta_3(\tau)}{\eta(\tau)}, \dfrac{\theta_2(\tau)}{\eta(\tau)}\right)
\end{equation}
defines a two-dimensional vector-valued modular form  $\rho_1$ of weight zero whose $S$-matrix is given by
$$\begin{pmatrix} W_1(-1/\tau) \\ W_2(-1/\tau) \end{pmatrix} = \frac{1}{\sqrt{2}} \begin{pmatrix} 1 & 1 \\ 1 & -1 \end{pmatrix} \begin{pmatrix} W_1(\tau) \\ W_2(\tau) \end{pmatrix}  .$$
These two series are precisely the level one characters of standard $A_1^{(1)}$-modules.

Let also
$${\rm ch}_{3,5}^{r,s}(\tau):= \frac{1}{\eta(\tau)}\sum_{n \in \mathbb{Z}} \left(q^{\frac{(30n+5r-3s)^2}{60}}-q^{\frac{(30n+5r+3s)^2}{60}}\right).$$
This notation indicates that ${\rm ch}_{3,5}^{1,1}(\tau), {\rm ch}_{3,5}^{1,2}(\tau), {\rm ch}_{3,5}^{2,1}(\tau), {\rm ch}_{2,5}^{2,2}(\tau)$ are precisely characters of four irreducible $(3,5)$ Virasoro minimal models.
We need the following known identities obtained using the quintuple product identity \cite[Theorem 1.3.17]{Berndt-book} that connect Rogers' series (\ref{S79})-(\ref{Rogers-1}) with these characters (see also \cite[p.\ 170]{Kawasetsu}):
\begin{align}
Z_{1}(\tau)&:=q^{1/40} \sum_{n=0}^\infty \frac{q^{n^2+n}}{(q;q)_{2n}}={\rm ch}_{3,5}^{1,1}(\tau), \\
Z_{2}(\tau)&:=q^{31/40} \sum_{n=0}^\infty \frac{q^{n^2+2n}}{(q;q)_{2n+1}}={\rm ch}_{3,5}^{2,1}(\tau), \\
Z_{3}(\tau)&:=q^{9/40} \sum_{n=0}^\infty \frac{q^{n^2+n}}{(q;q)_{2n+1}}={\rm ch}_{3,5}^{2,2}(\tau),\\
Z_{4}(\tau)&:=q^{-1/40} \sum_{n=0}^\infty \frac{q^{n^2}}{(q;q)_{2n}}={\rm ch}_{3,5}^{1,2}(\tau).
\end{align}

These expressions transforms under $\mathrm{SL}(2,\mathbb{Z})$, and they define a $4$-dimensional vector-valued modular form $\rho_2$ of weight zero 
with a well-known $S$-matrix:
$$\begin{pmatrix} Z_1(-1/\tau) \\ Z_{2}(-1/\tau) \\ Z_{3}(-1/\tau) \\Z_{4}(-1/\tau) \end{pmatrix}=\sqrt{\frac{2}{5}} \begin{pmatrix} \sin(2\pi/5) & -\sin(2\pi/5) & - \sin(\pi/5) & \sin (\pi/5) \\
-\sin(2\pi/5) & -\sin(2\pi/5) & \sin(\pi/5) & \sin (\pi/5) \\ -\sin(\pi/5) & \sin(\pi/5) & -\sin(2\pi/5) & \sin (2\pi/5) \\ \sin(\pi/5) & \sin(\pi/5) & \sin(2\pi/5) & \sin (2\pi/5) \end{pmatrix}
\begin{pmatrix} Z_1(\tau) \\ Z_{2}(\tau) \\ Z_{3}(\tau) \\Z_{4}(\tau) \end{pmatrix}.$$
See for instance \cite[p.\ 172, Eq.\ (15)]{Kawasetsu} except for a typo in the $(3,3)$-entry of this matrix. 
One can easily write the diagonal $T$-matrices for $\rho_1$ and $\rho_2$ using the leading terms.
Observe that $\rho_1 \otimes \rho_2$, that is $(W_i Z_j)_{1 \leq i \leq 2,1 \leq j \leq 4}$, defines a $8$-dimensional vector-valued modular form on $\Gamma(1)$. However, we also have a
proper subspace in $\rho_1 \otimes \rho_2$ that also closes under the full modular group.
\begin{prop} \label{6-dim} Let $W_i$ and $Z_i$ be as above. We have 
\begin{align} 
&\Big({\tilde{F}_1(q)}, {\tilde{F}_2(q)}, {\tilde{F}_3(q)}, {\tilde{F}_4(q)}, {\tilde{F}_5(q)}, {\tilde{F}_6(q)}\Big) = (\frak{f}(\tau)(W_1 Z_4+W_2 Z_3), \frak{f}_2(\tau)(W_1 Z_1+W_2 Z_2),   \nonumber \\
& \frak{f}_2(\tau)(W_1 Z_3+W_2 Z_4), \frak{f}(\tau)(W_1 Z_2+W_2 Z_1), \frak{f}_1(\tau)(W_1 Z_4-W_2 Z_3),\frak{f}_1(\tau)(W_2 Z_1-W_1 Z_2)), \label{F-W-vect}
\end{align}

and it defines a vector-valued modular form $\tilde{\rho}_1$ of weight zero on $\Gamma(1)$.
\end{prop}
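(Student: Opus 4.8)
The plan is to establish the two assertions of Proposition \ref{6-dim} in turn: first the $q$-series identity \eqref{F-W-vect}, and then the closure of the six-dimensional span under $\Gamma(1)$. The crucial point for the identity is that the products $W_iZ_j$ are already latent in the proof of Theorem \ref{main2}: the constant-term computation there wrote each $\chi_0(\dots)$ as (a Weber-type infinite-product prefactor) times $(S_0(q)+S_1(q))$, and in every case $S_0,S_1$ factor as a unary theta function ($\theta_2$ or $\theta_3$) times one of Rogers' series \eqref{S79}--\eqref{Rogers-1}. So I would rerun those computations, but stop \emph{before} converting the Rogers sums into their product forms.

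Using $\theta_3=\eta W_1$ and $\theta_2=\eta W_2$ from \eqref{sl2}, \eqref{theta2-prod}, \eqref{theta3-prod}, together with the definitions of $Z_1,\dots,Z_4$ as explicit $q$-powers times Rogers' series, each factor $S_0,S_1$ becomes $\eta W_iZ_j$ up to a power of $q$. Matching the two summands then reproduces the claimed combinations: the decomposition behind \eqref{sum-id-1} gives $S_0\leftrightarrow W_1Z_4$, $S_1\leftrightarrow W_2Z_3$, hence $\tilde F_1=\mathfrak{f}(W_1Z_4+W_2Z_3)$; likewise \eqref{sum-id-2}--\eqref{sum-id-4} yield $W_1Z_1+W_2Z_2$, $W_1Z_3+W_2Z_4$, and $W_1Z_2+W_2Z_1$. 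For $\tilde F_5,\tilde F_6$ I would apply $\tau\to\tau+1$ as in \eqref{F5-defn}, \eqref{F6-defn}: the factor $(-1)^k$ flips the sign of the odd-$n$ block $S_1$ relative to $S_0$, producing the differences $W_1Z_4-W_2Z_3$ and $W_2Z_1-W_1Z_2$, and simultaneously sends $(-q^{1/2};q)_\infty\to(q^{1/2};q)_\infty$, i.e. $\mathfrak{f}\to\mathfrak{f}_1$, exactly as in \eqref{F-W-vect}.

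Two bookkeeping steps remain. Since \eqref{sum-id-1} and \eqref{sum-id-4} were computed in base $q^2$ to clear the exponent $\tfrac12k^2$, I would substitute $q\to q^{1/2}$ to return to the variable in which $\tilde F_i=q^{\lambda_i}F_i$ is defined; then $\sum_i q^{2i^2}\to\theta_3(\tau)$, $\sum_i q^{2i^2-2i}\to q^{-1/4}\theta_2(\tau)$, and Rogers' sums become the series defining the $Z_j$. The prefactors combine with the $\eta$ arising from $\theta_2,\theta_3$ into Weber functions via \eqref{Weber-defn}: for instance $(-q^{1/2};q)_\infty/(q;q)_\infty=q^{1/16}\mathfrak{f}/\eta$, and the leftover powers of $q$ are absorbed into $q^{\lambda_i}$ (as a sanity check, the leading exponent $-\tfrac1{48}-\tfrac1{15}=-\tfrac{7}{80}=\lambda_1$, and the other five normalizations check the same way).

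For the modularity statement I would assemble the transformation data already recorded for the three ingredients: $\rho_1=(W_1,W_2)$ with the $S$-matrix of \eqref{sl2}, $\rho_2=(Z_1,\dots,Z_4)$ with the stated $\sin(\pi/5)$-matrix, and the Weber triple $(\mathfrak{f},\mathfrak{f}_1,\mathfrak{f}_2)$ with its $S$- and $T$-actions, so that the relevant products live inside $\rho_1\otimes\rho_2$ twisted by the Weber vector. The $T$-action is the easy half: the diagonal $T$-matrices of $\rho_1,\rho_2$ together with $\mathfrak{f}\to e^{-\pi i/24}\mathfrak{f}_1$, $\mathfrak{f}_1\to e^{-\pi i/24}\mathfrak{f}$, $\mathfrak{f}_2\to e^{\pi i/12}\mathfrak{f}_2$ show that $\tilde F_1,\tilde F_4$ map to scalar multiples of $\tilde F_5,\tilde F_6$ and back, while $\tilde F_2,\tilde F_3$ are $T$-eigenforms, so the span closes. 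The hard part will be the $S$-action: I must expand $\mathfrak{f}(W_1Z_4+W_2Z_3)(-1/\tau)$ and its five analogues using all three $S$-matrices at once, and verify that the resulting linear combinations of the eight products $W_iZ_j$, weighted by the transformed Weber functions, collapse back onto precisely the six combinations in \eqref{F-W-vect}. This is a finite but delicate linear-algebra check, complicated by the $\sin(\pi/5),\sin(2\pi/5)$ entries of the $\rho_2$ matrix, the $\tfrac1{\sqrt2}$ factors from $\rho_1$ and from $\mathfrak{f}_2\to\tfrac1{\sqrt2}\mathfrak{f}_1$, and the need to fix a consistent branch of the square roots; the real content is that these factors conspire so that the chosen six-dimensional Weber-twisted subspace is invariant and no component leaks out of $\rho_1\otimes\rho_2$.
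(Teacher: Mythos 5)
Your proposal is correct and follows essentially the same route as the paper: the authors likewise obtain \eqref{F-W-vect} by reading off the $S_0+S_1$ decompositions from the proof of Theorem \ref{main2} before the Rogers sums are converted to products (they carry this out explicitly for $\tilde{F}_1=\mathfrak{f}(\tau)(W_1Z_4+W_2Z_3)$), and they establish the $\Gamma(1)$-closure by the same direct computation with the $S$- and $T$-matrices of $(W_1,W_2)$, $(Z_1,\dots,Z_4)$ and the Weber triple. Your bookkeeping of the base change $q^2\to q$, the prefactor identities such as $(-q^{1/2};q)_\infty/(q;q)_\infty=q^{1/16}\mathfrak{f}/\eta$, and the $\tau\to\tau+1$ derivation of the $\tilde F_5,\tilde F_6$ entries all check out.
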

In fact, the equality \eqref{F-W-vect} follows directly from Theorem \ref{main2}.
One can also see this in a more straightforward way from the proof of Theorem \ref{main2}. For instance, from \eqref{1-F-split}, \eqref{1-S0-result} and \eqref{1-S1-result} (after replacing $q^2$ by $q$) we immediately conclude that
\begin{align}
    \tilde{F}_1(q)=\mathfrak{f}(\tau)(W_1Z_4+W_2Z_3).
\end{align}
The second assertion of this proposition follows by direct computations using the $S$ and $T$-matrices of $W_1, W_2$ and $Z_i$ ($1\leq i \leq 4$).

Denote by $\tilde{\rho}_{2}$ the vector-valued  modular form of weight zero:
\begin{align}
& (h_1(q),h_2(q),h_3(q),h_4(q),h_5(q),h_6(q)):=
\left(\frac{\frak{f}(\tau)^3}{\eta(\tau)^3} (\partial \Theta)_{1,\frac52}(\tau), 2\frac{\frak{f}_2(\tau)^3}{\eta(\tau)^3} (\partial \Theta)_{\frac12,\frac52}(\tau), \right. \nonumber \\ 
& 2\frac{\frak{f}_2(\tau)^3}{\eta(\tau)^3} (\partial \Theta)_{\frac32,\frac52}(\tau) , 
\label{second} \left.\frac{\frak{f}(\tau)^3}{\eta(\tau)^3} (\partial \Theta)_{2,\frac52}(\tau),
\frac{\frak{f}_1(\tau)^3}{\eta(\tau)^3} (\partial G)_{1,\frac52}(\tau),
\frac{\frak{f}_1(\tau)^3}{\eta(\tau)^3} (\partial G)_{2,\frac52}(\tau)\right).
\end{align}
This is precisely the vector-valued modular form appearing in Corollary \ref{gam}(2),  except that we added the factor 2 for the second and third entries.

So far we constructed two $6$-dimensional vector valued modular forms: $\tilde{\rho}_1$ in (\ref{F-W-vect}) and $\tilde{\rho}_2$  in (\ref{second}).
It is easy to see that (by direct computations) the $S$ and $T$ matrices of them agree with each other. We claim that  $\tilde{\rho}_1=\tilde{\rho}_2$.

To prove the above claim, we switch to new bases of vector-valued modular forms. Recall the basis $g_i(q)$ from Remark \ref{new-basis}. Similarly we let 
\begin{align*}
\tilde{g}_1(q)&:=h_1(q)+h_5(q)=q^{-7/80}(2+12q+30q^2+\cdots) \in q^{-7/80} \mathbb{C}[[q]], \\
\tilde{g}_2(q)&:=h_1(q)-h_5(q)=q^{33/80}(6+18q+54q^2+\cdots) \in q^{33/80}\mathbb{C}[[q]], \\
\tilde{g}_3(q)&:=h_4(q)+h_6(q)=q^{17/80}(4+6q+30q^2+\cdots) \in q^{17/80} \mathbb{C}[[q]], \\
\tilde{g}_4(q)&:=h_4(q)-h_6(q)=q^{57/80}(6+16q+42q^2+\cdots)  \in q^{57/80}\mathbb{C}[[q]],  \\
\tilde{g}_5(q)&:=h_2(q)=q^{1/40}(1+6q+15q^2+\cdots) \in q^{1/40} \mathbb{C}[[q]], \\ \tilde{g}_6(q)&:=h_3(q)=q^{9/40}(3+11q+30q^2+\cdots) \in q^{9/40}\mathbb{C}[[q]].
\end{align*}

Then $(g_i(q)-\tilde{g}_i(q))_{i=1}^6$ also transforms as a vector-valued modular form. Using their $q$-expansions it is easy to 
see (cf. Remark \ref{rem-wronski}) that the Wronskian of $g_i(h)-\tilde{g}_i(q)$, $1 \leq i \leq 6$
has the order of vanishing that is strictly bigger than $\frac{5}{2}$.
But then, according to Proposition \ref{ODE}, the Wronskian is identically zero and thus $(g_i(q)-\tilde{g}_i(q))$ are linearly dependent. The last sentence in Remark \ref{new-basis} implies that the linear dependence is equivalent to $g_j(q)=\tilde{g}_j(q)$ for some $j$. Thus we obtain a $5$-dimensional vector-valued modular form $(g_i(q)-\tilde{g}_i(q))_{i \neq j}$. Applying the same type of argument to it yields $g_k(q)=\tilde{g}_k(q)$ for $k \neq j$, etc. Thus we conclude $g_i(q)=\tilde{g}_i(q)$ for all $i$. That clearly implies $\tilde{F}_i(q)=h_i(q)$ and completes the proof of Theorem \ref{main}.
\begin{rem}
The claim $\tilde{\rho}_1(\tau)=\tilde{\rho}_2(\tau)$ can also be proved by a Sturm-type criterion of vector-valued modular forms. From \cite[Proposition 1.1]{MR} we know that it suffices to compare the first two terms in their $q$-expansions. Interestingly, in the above proof using Wronskian,  we also only need to compare the first two terms of $g_i(q)$ and $\tilde{g}_i(q)$, which guarantees that the Wronskian of $g_i(q)-\tilde{g}_i(q)$, $1 \leq i \leq 6$ has the order of vanishing $>\frac{5}{2}$. 
\end{rem}







\subsection{Second proof of Theorem \ref{main}}
We will need the order of a function $f(\tau)$ with respect to a congruence subgroup $G$ at the cusp $p\in \mathbb{Q}\cup \{\infty\}$ and denote it as $\mathrm{ord}(f,p)$.

Recall the theta function $\theta_3(\tau)$ defined in \eqref{theta3-prod}. 
It is known that \cite[Proposition 1.41]{Ono} $\theta_3(\tau)\in M_{\frac{1}{2}}(\Gamma_0(4))$. For any odd primitive Dirichlet character $\psi$ with conductor $N$, it is known that \cite[Theorem 1.44]{Ono}
$$\theta(\psi,\tau):=\sum_{n=1}^\infty \psi(n) nq^{n^2}\in S_{\frac{3}{2}}(\Gamma_0(4N^2,\psi\chi_{-4}),$$
where $\chi_{-4}$ is the nontrivial Dirichlet character modulo 4.

\begin{lemma}\label{lem-theta}
For $a\in \{1,2,3,4\}$, we have
\begin{align}
\sum_{\begin{smallmatrix} n\in \mathbb{Z} \\ n\equiv a ~~\mathrm{(mod~~5)} \end{smallmatrix}} nq^{n^2} \in M_{\frac{3}{2}}(\Gamma_1(100)).
\end{align}
\end{lemma}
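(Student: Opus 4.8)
The plan is to realize each residue–class sum as a linear combination of the weight-$\tfrac32$ twisted theta series $\theta(\psi,\tau)$ attached to the \emph{odd} Dirichlet characters modulo $5$, and then to invoke the structure theorem of Ono quoted above. Write $S_a(\tau):=\sum_{n\equiv a\,(\mathrm{mod}\,5)} n q^{n^2}$, with the sum ranging over all $n\in\mathbb{Z}$. Since every $n\equiv a\ (\mathrm{mod}\ 5)$ with $a\in\{1,2,3,4\}$ is automatically coprime to $5$, orthogonality of the Dirichlet characters modulo $5$ gives $\mathbf{1}[n\equiv a\ (5)]=\tfrac14\sum_{\chi\,(5)}\bar\chi(a)\chi(n)$ for $\gcd(n,5)=1$, both sides vanishing when $5\mid n$. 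Substituting this into $S_a$ and interchanging the two sums yields
\[
S_a(\tau)=\frac14\sum_{\chi\,(5)}\bar\chi(a)\sum_{n\in\mathbb{Z}}\chi(n)\,n q^{n^2}.
\]

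The next step is a parity argument to convert the symmetric inner sum into the one-sided series appearing in Ono's definition. Pairing $n$ with $-n$ gives $\sum_{n\in\mathbb{Z}}\chi(n)\,n q^{n^2}=(1-\chi(-1))\,\theta(\chi,\tau)$, so this vanishes for every \emph{even} character and equals $2\,\theta(\chi,\tau)$ for every \emph{odd} one. Since $(\mathbb{Z}/5)^\times$ is cyclic of order $4$, there are exactly two odd characters modulo $5$, namely the two order-$4$ characters $\chi_1$ and $\chi_3=\bar\chi_1$; both are nontrivial and hence primitive of conductor $N=5$. Thus the decomposition collapses to
\[
S_a(\tau)=\tfrac12\bigl(\bar\chi_1(a)\,\theta(\chi_1,\tau)+\bar\chi_3(a)\,\theta(\chi_3,\tau)\bigr).
\]

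Finally I would apply \cite[Theorem 1.44]{Ono} with $N=5$, so that $4N^2=100$, to get $\theta(\chi_j,\tau)\in S_{\frac32}(\Gamma_0(100),\chi_j\chi_{-4})$ for $j=1,3$. Descending from nebentypus level to $\Gamma_1(100)$ via the standard inclusion $M_{\frac32}(\Gamma_0(100),\psi)\subseteq M_{\frac32}(\Gamma_1(100))$ (any nebentypus character becomes trivial on $\Gamma_1(100)$), one concludes that each $\theta(\chi_j,\tau)$, and hence the linear combination $S_a(\tau)$, lies in $M_{\frac32}(\Gamma_1(100))$ (indeed in the cusp subspace). The argument is essentially bookkeeping: the only points requiring care are correctly identifying which characters modulo $5$ are odd and confirming their primitivity and conductor, so that the level comes out exactly $4N^2=100$, together with the parity cancellation that replaces $\sum_{n\ge1}$ by $\sum_{n\in\mathbb{Z}}$; there is no analytic obstacle, as all the modularity content is supplied by the quoted theorem. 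As a consistency check, the identity $S_{5-a}=-S_a$ (from $n\mapsto -n$) matches $\bar\chi_j(5-a)=\bar\chi_j(-1)\bar\chi_j(a)=-\bar\chi_j(a)$ for odd $\chi_j$.
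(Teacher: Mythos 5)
Your proof is correct and follows essentially the same route as the paper: both express the residue-class sums $\sum_{n\equiv a\,(5)} nq^{n^2}$ as linear combinations of the theta series twisted by the two odd (order-$4$, primitive, conductor-$5$) characters and then invoke \cite[Theorem 1.44]{Ono} with $4N^2=100$. The only cosmetic difference is that you obtain the linear combination via character orthogonality, whereas the paper writes out $\theta(\psi_0,\tau)$ and $\theta(\psi_1,\tau)$ explicitly and solves for the two sums with $a=1,2$, handling $a=3,4$ by the same $n\mapsto -n$ symmetry you use as a consistency check.
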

\begin{proof}
Let $\psi_k$ ($k=0,1$) be the primitive Dirichlet character with conductor 5 satisfying $\psi_k(2)=(-1)^k i$. Then
\begin{align}
\theta(\psi_0,\tau)&=\sum_{n\in \mathbb{Z}} \psi_0(n)nq^{n^2}
= \sum_{n\in \mathbb{Z}}(5n+1)q^{(5n+1)^2}-\sum_{n\in \mathbb{Z}}(5n+4)q^{(5n+4)^2} \nonumber \\
&+i\left( \sum_{n\in \mathbb{Z}}(5n+2)q^{(5n+2)^2}-\sum_{n\in \mathbb{Z}}(5n+3)q^{(5n+3)^2} \right) \nonumber \\
&=2\sum_{n\in \mathbb{Z}} (5n+1)q^{(5n+1)^2} +2i\sum_{n\in \mathbb{Z}} (5n+2)q^{(5n+2)^2}. \label{psi1}
\end{align}
Here for the last equality we used the fact that for $a\in \{1,2\}$,
\begin{align}
\sum_{n\in \mathbb{Z}} (5n+a)q^{(5n+a)^2}=-\sum_{n\in \mathbb{Z}}(5n+5-a)q^{(5n+5-a)^2}, \label{theta-equal}
\end{align}
which can be proved easily by changing $n$ to $-n-1$.

In the same way, we have
\begin{align}
\theta(\psi_1,\tau)=2\sum_{n\in \mathbb{Z}} (5n+1)q^{(5n+1)^2}-2i\sum_{n\in \mathbb{Z}} (5n+2)q^{(5n+2)^2}. \label{psi2}
\end{align}
Since $\theta(\psi_k,\tau)\in M_{\frac{3}{2}}(\Gamma_1(100))$ ($k=0,1$), from \eqref{psi1} and \eqref{psi2} we deduce that
\begin{align}
\sum_{n\in \mathbb{Z}} (5n+1)q^{(5n+1)^2} &=\frac{1}{4}(\theta(\psi_0,\tau)+\theta(\psi_1,\tau))\in M_{\frac{3}{2}}(\Gamma_1(100)), \\
\sum_{n\in \mathbb{Z}} (5n+2)q^{(5n+2)^2} &=\frac{1}{4i}(\theta(\psi_0,\tau)-\theta(\psi_1,\tau))\in M_{\frac{3}{2}}(\Gamma_1(100)).
\end{align}
This together with \eqref{theta-equal} proves the lemma.
\end{proof}

We also need the following identity
\begin{align}
J_1^2=\frac{J_2J_8^5}{J_4^2J_{16}^2}-2q\frac{J_2J_{16}^2}{J_8}. \label{J1square}
\end{align}
This appeared frequently in the literature and is a direct consequence of \cite[p.\ 40, Entry 25(v),(vi)]{Notebook3}.

Following the notion in \cite{Garvan-Liang}, we define the generalized Dedekine eta function
\begin{align}\label{general-eta}
    \eta_{\delta;g}(\tau):=q^{\frac{1}{2}\delta P_2(g/\delta)}\prod\limits_{m\equiv \pm g \pmod{\delta}} (1-q^m),
\end{align}
where $P_2(t)=\{t\}^2-\{t\}+\frac{1}{6}$ is the second periodic Bernoulli polynomial, $\{t\}$ is the fractional part of $t$, $g,\delta, m\in \mathbb{Z}^+$ and $0<g<\delta$.

Now we are ready to give our second proof of Theorem \ref{main}. 
By the definition given in \eqref{F5-defn} and \eqref{F6-defn}, we know that the identities for $\widetilde{F}_5(q)$ and $\widetilde{F}_6(q)$ follow from those of $\widetilde{F}_1(q)$ and $\widetilde{F}_4(q)$, respectively. Thus, it suffices to prove the first four identities. By \eqref{sum-id-1}--\eqref{sum-id-4}, the first four identities are equivalent to
\begin{align}
T_1(q)&:=\sum_{n\in \mathbb{Z}}(5n+1)q^{5n^2+2n}=\frac{J_1^2J_4^8J_{40}}{J_2^5J_8^2J_{8,40}}+2q\frac{J_1^2J_4J_8^2J_{6,20}J_{8,40}}{J_2^3J_{40}}, \label{T1-id} \\
T_2(q)&:=\sum_{n\in \mathbb{Z}}(5n+2)q^{5n^2+4n}=2\frac{J_1^2J_4J_8^2J_{2,20}J_{16,40}}{J_2^3J_{40}}+q\frac{J_1^2J_4^7J_{8,20}J_{4,40}}{J_2^5J_8^2J_{40}},  \label{T2-id} \\
T_3(q)&:=\sum_{n\in \mathbb{Z}}(10n+1)q^{5n^2+n}=\frac{J_2J_4^3J_{2,20}J_{16,40}}{J_8^2J_{40}}+2q^2\frac{J_2^3J_8^2J_{8,20}J_{4,40}}{J_4^3J_{40}}, \label{T3-id} \\
T_4(q)&:=\sum_{n\in \mathbb{Z}}(10n+3)q^{5n^2+3n}=2\frac{J_2^3J_8^2J_{40}}{J_4^2J_{8,40}}+\frac{J_2J_4^3J_{6,20}J_{8,40}}{J_8^2J_{40}}. \label{T4-id}
\end{align}
We will prove \eqref{T1-id} first and then deduce the other identities from it. After replacing $q$ by $q^5$ and multiplying both sides of \eqref{T1-id} by $q\theta_3(5\tau)$, we see that \eqref{T1-id} is equivalent to
\begin{align}
\theta_3(5\tau)\sum_{n\in \mathbb{Z}}(5n+1)q^{(5n+1)^2}=\theta_3^4(5\tau)\left(f_1(\tau)+f_2(\tau) \right). \label{T1-id-equiv}
\end{align}
Here
\begin{align}
f_1(\tau)&:=q\frac{J_5^8J_{20}^{14}J_{200}}{J_{10}^{20}J_{40}^2J_{40,200}}=\frac{\eta^8(5\tau)\eta^{14}(20\tau)}{\eta^{20}(10\tau)\eta^2(40\tau)\eta_{200,40}(\tau)}, \\
f_2(\tau)&:=2q^6\frac{J_5^8J_{20}^7J_{40}^2J_{30,100}J_{40,200}}{J_{10}^{18}J_{200}} \nonumber \\
&=\frac{\eta^8(5\tau)\eta^7(20\tau)\eta^2(40\tau)\eta(100\tau)\eta_{100,30}(\tau)\eta_{200,40}(\tau)}{\eta^{18}(10\tau)}.
\end{align}

With the help of Maple and the algorithm in \cite{Garvan-Liang}, it is easy to check that both $f_1(\tau)$ and $f_2(\tau)$ are modular functions on $\Gamma_1(200)$. Their poles and corresponding orders of them are listed in Table \ref{tab-ord}.
\begin{table}[h]
 \renewcommand\arraystretch{1.5}
\begin{tabular}{c|c|c}
  \hline
   cusp $p$ & $\mathrm{ord}(f_i(\tau),p)$ & $\mathrm{ord}(\theta_3^4(5\tau),p)$ \\
   \hline
$\begin{matrix}\frac{1}{2} , \frac{1}{6} ,  \frac{1}{14} ,  \frac{1}{18}, \frac{1}{22}, \frac{1}{26}, \frac{1}{34},
\frac{1}{38}, \frac{1}{42}, \frac{1}{46}, \\
\frac{1}{54}, \frac{1}{58}, \frac{1}{62}, \frac{1}{66},
\frac{1}{74}, \frac{1}{78}, \frac{1}{82}, \frac{1}{86}, \frac{1}{94}, \frac{1}{98} \end{matrix}$  & $-16$ & $20$ \\
\hline
$ \begin{matrix} \frac{1}{10},  \frac{1}{30},  \frac{1}{70}, \frac{1}{90}, \frac{3}{10}, \frac{3}{70}, \frac{7}{10}, \frac{7}{30}, \\
\frac{7}{90}, \frac{9}{10}, \frac{9}{70}, \frac{23}{30},  \frac{23}{90},  \frac{29}{30}, \frac{29}{90}, \frac{67}{70} \end{matrix}$  & $-80$ & $100$ \\
\hline
 $\frac{1}{50}, \frac{9}{50}, \frac{11}{50}, \frac{19}{50}, \frac{21}{50},  \frac{29}{50},  \frac{31}{50}, \frac{39}{50}, \frac{41}{50}, \frac{49}{50}$   & $-16$  & $20$\\
\hline
$\frac{3}{50},  \frac{7}{50}, \frac{13}{50}, \frac{17}{50}, \frac{23}{50}, \frac{27}{50}, \frac{33}{50}, \frac{37}{50}, \frac{43}{50}, \frac{47}{50}$ & $-14$ & $20$\\
  \hline
\end{tabular}
\vspace{2mm}
\caption{Orders of poles and zeros at cusps for $\Gamma_1(200)$}
\label{tab-ord}
\end{table}

On the other hand, it is easy to see that $\theta_3^4(5\tau) \in M_2(\Gamma_0(20))$. Hence $\theta_3^4(5\tau)\in M_2(\Gamma_1(200))$. We can evaluate the orders of zeros of $\theta_3^4(5\tau)$ at any cusp for $\Gamma_1(200)$ (using Theorem 2.3 and the equation (2.12) in \cite{Garvan-Liang}). In Table \ref{tab-ord} we only list the orders of zeros of $\theta_3^4(5\tau)$ at the poles of $f_i(\tau)$ ($i=1,2$). It turns out that after multiplying by $\theta_3^4(5\tau)$, all the poles of $f_i(\tau)$ will be eliminated. Hence $\theta_3^4(5\tau) f_i(\tau)\in M_2(\Gamma_1(200))$ ($i=1,2$).

So far we have proved that both sides of \eqref{T1-id-equiv} belong to $M_2(\Gamma_1(200))$.  By Sturm's criterion (see \cite[p.\ 185, Corollary 5.6.14]{Cohen-Stromberg}, for example), to prove \eqref{T1-id-equiv}, it suffices to verify that both sides agree for the first
$$1+\frac{2}{12}\cdot \frac{1}{2}[\mathrm{SL}(2,\mathbb{Z}):\Gamma_1(200)]=2401$$
terms. We have checked this with Maple. Hence \eqref{T1-id-equiv} holds and we finish the proof of \eqref{T1-id}.

Next, we are going to prove \eqref{T2-id}--\eqref{T4-id} based on \eqref{T1-id}. We aim to find a 2-dissection formula for $T_1(q)$:
\begin{align}
T_1(q)=\sum_{n\in \mathbb{Z}}(5n+1)q^{5n^2+2n}=L_0(q^2)+qL_1(q^2). \label{T1-dissectin}
\end{align}
On the one hand, since $5n^2+2n$ has the same parity with $n$, we have
\begin{align}
L_0(q^2)&=\sum_{n ~\text{even}}(5n+1)q^{5n^2+2n}=\sum_{n\in \mathbb{Z}}(10n+1)q^{20n^2+4n}=T_3(q^4), \label{L0-T} \\
qL_1(q^2)&=\sum_{n~ \text{odd}} (5n+1)q^{5n^2+2n}=\sum_{n\in \mathbb{Z}}(5(-2n-1)+1)q^{5(-2n-1)^2+2(-2n-1)}\nonumber \\
&=-2q^3\sum_{n\in \mathbb{Z}}(5n+2)q^{20n^2+16n}=-2q^3T_2(q^4). \label{L1-T}
\end{align}
On the other hand, substituting \eqref{J1square} into \eqref{T1-id}, we obtain
\begin{align}
&\sum_{n\in \mathbb{Z}}(5n+1)q^{5n^2+2n}=\left(\frac{J_2J_8^5}{J_4^2J_{16}^2}-2q\frac{J_2J_{16}^2}{J_8} \right)\cdot \left(\frac{J_4^8J_{40}}{J_2^5J_8^2J_{8,40}}+2q\frac{J_4J_8^2J_{6,20}J_{8,40}}{J_2^3J_{40}}\right) \nonumber \\
&=\frac{J_4^6J_8^3J_{40}}{J_2^4J_{16}^2J_{8,40}}-4q^2\frac{J_4J_8J_{16}^2J_{6,20}J_{2,40}}{J_2^2J_{40}} +2q\left(\frac{J_8^7J_{6,20}J_{8,40}}{J_2^2J_4J_{16}^2J_{40}}-\frac{J_4^8J_{16}^2J_{40}}{J_2^4J_8^3J_{8,40}} \right). \label{T1-final}
\end{align}
Combining \eqref{T1-dissectin}--\eqref{T1-final}, we deduce that
\begin{align}
&L_0(q)=T_3(q^2)=\frac{J_2^6J_4^3J_{20}}{J_1^4J_8^2J_{4,20}}-4q\frac{J_2J_4J_8^2J_{3,10}J_{4,20}}{J_1^2J_{20}}, \label{F0-result} \\
&L_1(q)=-2qT_2(q^2)=\frac{J_4^7J_{3,10}J_{4,20}}{J_1^2J_2J_8^2J_{20}}-\frac{J_2^8J_8^2J_{20}}{J_1^4J_4^3J_{4,20}}. \label{F1-result}
\end{align}
Using \eqref{F0-result} and \eqref{F1-result} and the method in \cite{Garvan-Liang}, it is easy to verify that \eqref{T2-id} and \eqref{T3-id} hold.

It remains to prove \eqref{T4-id}. For this we make a 2-dissection for $T_2(q)$:
\begin{align}
T_2(q)=\sum_{n\in \mathbb{Z}}(5n+2)q^{5n^2+4n}=H_0(q^2)+qH_1(q^2).
\end{align}
In the same way as we did for $T_1(q)$, we can prove that
\begin{align}
&H_0(q)=2T_1(q^2)=2\frac{J_4^7J_{1,10}J_{8,20}}{J_1^2J_2J_8^2J_{20}}-2q\frac{J_2^7J_8^2J_{4,10}J_{2,20}}{J_1^4J_4^3J_{20}}, \label{H0-result} \\
&H_1(q)=-T_4(q^2)=\frac{J_2^5J_4^3J_{4,10}J_{2,20}}{J_1^4J_8^2J_{20}}-4\frac{J_2J_4J_8^2J_{1,10}J_{8,20}}{J_1^2J_{20}}. \label{H1-result}
\end{align}
Now from \eqref{H1-result} and using the method in \cite{Garvan-Liang}, it is easy to verify that \eqref{T4-id} holds.

\section{General conjecture and concluding remarks}\label{sec-general}

\subsection{Non-modularity of some Nahm sums} 
In \cite[Section 5]{CMP} in addition to $\chi_0(1,1,1)$ and $\chi_0(1,1,q^{\frac12})$ two additional specializations are considered: $ \chi_0(q,1,1)$ and $ \chi_0(1,q,1)$. Thus it 
seems natural to ask whether these two series are also modular after addition of a suitable multiplicative factor. We next show that this is not the case. 

Recall that we denote by $f_{A,B,C}(q)$ the Nahm sum associated to $T_3$ matrix with $B=(B_1,B_2,B_3)$ as in Section 1.
Using this notation, we can write $f_{A,(1,0,0),C}(q)=q^C \chi_0(q,1,1)$  and $f_{A,(0,1,1),C}(q)=q^C \chi_0(1,q,1)$. Denote by 
 $$Q_1=\frac12(3-\sqrt{5}), \quad Q_2=-2+\sqrt{5}, \quad Q_3=\frac14(3-\sqrt{5})$$
the unique solution inside the interval $(0,1)$ of the TBA system \cite[Lemma 2.1]{VZ}: 
$$1-Q_1=Q_1^2 Q_2^{-1}, \quad 1-Q_2=Q_1^{-1} Q_2^2 Q_3^{-1}, \quad 1-Q_3=Q_3 Q_2^{-1}.$$
\begin{prop}  The Nahm sums $f_{A,(1,0,0),C}(q)$  and $f_{A,(0,1,0),C}(q)$ are not modular for any rational number $C$.
\end{prop}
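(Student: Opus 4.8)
The plan is to obstruct modularity through the behaviour of the Nahm sums as $q\to 1^-$, following the saddle-point method of Vlasenko--Zwegers. Writing $q=e^{-\epsilon}$, I would first check that for every admissible $B$ the sum $f_{A,B,C}(e^{-\epsilon})$ is dominated, as $\epsilon\to 0^+$, by the interior critical point of the associated potential, which is precisely the real TBA solution $(Q_1,Q_2,Q_3)$ recorded above: the linear term $B^{\mathrm{T}}n$ only perturbs the saddle at order $\epsilon$ and does not move it to leading order, so the leading exponential growth is $B$-independent. A multivariable Euler--Maclaurin analysis of each factor $1/(q)_{n_i}$, together with a Gaussian expansion around the saddle, then yields a complete asymptotic expansion
\begin{equation*}
f_{A,B,C}(e^{-\epsilon})\sim e^{V/\epsilon}\bigl(a_0+a_1\epsilon+a_2\epsilon^2+\cdots\bigr),\qquad V=\sum_{i=1}^{3}L(1-Q_i),
\end{equation*}
where $L(x)=\mathrm{Li}_2(x)+\tfrac12\log x\log(1-x)$ is the Rogers dilogarithm. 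Here the power of $\epsilon$ in the prefactor is $0$, consistent with weight zero, and the coefficients $a_n=a_n(B)$ are algebraic numbers lying in $\mathbb{Q}(\sqrt5)$, computable from the Hessian $A+\mathrm{diag}\bigl(Q_i/(1-Q_i)\bigr)$ and the higher derivatives of the potential.

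The crucial observation is that the leading data $V$ and the exponent $0$ coincide with those of the known modular case $B=(0,0,0)$, so they cannot by themselves detect non-modularity; the obstruction must sit in the subleading coefficients. Here one uses the following necessary condition. The radial limit of a weight-zero modular function towards the cusp $\tau=0$ is governed by its Fourier expansion there, giving $c_0\,e^{V/\epsilon}\bigl(1+O(e^{-c/\epsilon})\bigr)$ with \emph{no intervening positive powers of $\epsilon$}. Hence, if $f_{A,B,C}=q^{C}\chi_0$ were modular for some $C$, then after absorbing the harmless factor $q^{C}=e^{-\epsilon C}$ the entire power series $\sum_{n\ge 0}a_n\epsilon^n$ would have to collapse to $a_0\,e^{C\epsilon}$.

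The first consequence of this collapse, and the one I would actually verify, is the $C$-\emph{independent} identity
\begin{equation*}
2\,a_0\,a_2-a_1^2=0,
\end{equation*}
which one checks is invariant under the substitution $a_n\mapsto\sum_{k}a_{n-k}(-C)^k/k!$ induced by multiplication by $q^{C}$. I would therefore compute $a_0,a_1,a_2$ explicitly (via the Vlasenko--Zwegers algorithm) for the two vectors $B=(1,0,0)$ and $B=(0,1,0)$ and exhibit that $2a_0a_2-a_1^2\neq 0$ in each case; since this quantity is independent of $C$, no choice of rational $C$ can make the asymptotic expansion purely exponential, and modularity is impossible. As a sanity check I would confirm that the same invariant vanishes for the vectors already shown to be modular (e.g.\ $B=(0,0,0)$ and $B=(0,0,\tfrac12)$), which simultaneously pins down the normalisation.

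The main obstacle is the second-order computation: obtaining $a_2$ requires expanding each $1/(q)_{n_i}$ to two subleading orders in $\epsilon$ and incorporating the cubic and quartic fluctuation corrections around a three-dimensional saddle, after which the $\sqrt5$-parts must be tracked through to show they do not cancel. One must also justify rigorously that the real TBA saddle remains dominant once the perturbation $B^{\mathrm{T}}n$ is switched on, rather than being overtaken by a complex critical point. This bookkeeping, not any conceptual difficulty, is where the real work lies.
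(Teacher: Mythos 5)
Your overall strategy---obstructing modularity via the Vlasenko--Zwegers asymptotic expansion as $q\to 1^-$---is exactly the one the paper uses, but as written your argument has a genuine gap and is also harder than it needs to be. The decisive step in your plan is the assertion that $2a_0a_2-a_1^2\neq 0$ for $B=(1,0,0)$ and $B=(0,1,0)$; you never establish this, and you yourself identify the computation of $a_2$ (two subleading orders of each $1/(q)_{n_i}$ plus cubic and quartic fluctuation corrections around a three-dimensional saddle) as the real obstacle. Until that inequality is actually verified, the argument is a programme rather than a proof, and there is a priori no guarantee that your chosen $C$-independent invariant does not accidentally vanish for these $B$.

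More importantly, the detour through $a_2$ is unnecessary. You dismiss the first-order data too quickly: while the leading exponential $V$ and the power of $\epsilon$ in the prefactor are indeed $B$-independent, the first subleading coefficient already forces the value of $C$. In the paper's normalization the condition is $c_1=\gamma$ with $\gamma=C+\frac{1}{24}\sum_{i=1}^{3}\frac{1+Q_i}{1-Q_i}$, which is linear in $C$ and hence determines the unique candidate $C$ as an explicit element of $\mathbb{Q}(\sqrt{5})$ depending on $B$. The paper then simply observes that for $B=(1,0,0)$ this forced value is $C=\frac{1}{80}(-139+68\sqrt{5})$, which is irrational (and similarly for $B=(0,1,0)$); since a modular triple requires $C\in\mathbb{Q}$, modularity fails. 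This uses only the first-order coefficient from \cite[Corollary 3.1]{VZ} and entirely avoids the Hessian and quartic bookkeeping. To repair your version, either carry out the $a_2$ computation and verify the non-vanishing, or---much more economically---replace your invariant by the rationality test on the forced value of $C$, which closes the argument with the data you already have at first order.
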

\begin{proof} To prove this, it suffices to argue that the two Nahm sums do not have expected asymptotic expansion around zero. Letting $q=e^{-\epsilon}$, then according to \cite[Theorem 2.1]{VZ} we have the following asymptotic behavior (as $\epsilon \to 0^+$):
$$f_{A,B,C}(e^{-\epsilon})e^{-\frac{\alpha}{\epsilon}} \sim \beta e^{-\gamma \epsilon}(1+\sum_{p \geq 1} c_p \epsilon^p)$$
where $\alpha$ is a positive constant, $\gamma=C+\frac{1}{24} \sum_{i=1}^r \frac{1+Q_i}{1-Q_i}$, $c_p$ are some hard-to-compute coefficients expressed using generalized $3$-fold Gaussian integrals and $\beta$ is a nonzero constant not needed here. As a necessary condition for modularity we notice relations \cite[Corollary 3.1]{VZ}:
$$\frac{\gamma^p}{p!} -c_p=0, \ p \geq 1.$$
In our situation, the condition $c_1-\gamma=0$ is equivalent to 
\begin{align*}
C&=\frac{9 {B_1}^2}{4 \sqrt{5}}-\frac{3 {B_1}^2}{4}+\frac{3 {B_1} {B_2}}{\sqrt{5}}
-{B_1}{B_2}-\frac{ {B_1}  {B_3}}{2 \sqrt{5}}+\frac{ {B_1}  {B_3}}{2}+\frac{2  {B_1}}{\sqrt{5}}-\frac{9
    {B_1}}{10} \\
    & \quad +\frac{ {B_2}^2}{\sqrt{5}}+\frac{ {B_2}  {B_3}}{2 \sqrt{5}}+\frac{ {B_2}
    {B_3}}{2}+\frac{7  {B_2}}{4 \sqrt{5}}-\frac{17
    {B_2}}{20}+\frac{ {B_3}^2}{\sqrt{5}}+\frac{ {B_3}^2}{4}-\frac{ {B_3}}{2
   \sqrt{5}}+\frac{ {B_3}}{10}-\frac{7}{80}.
\end{align*}
Plugging in $B=(1,0,0)$ gives the value $C= \frac{1}{80}(-139 + 68 \sqrt{5})$ which is irrational and similarly for $B=(0,1,0)$.
Therefore the two Nahm sums in question  cannot be modular.
\end{proof}

Although $\chi_0(q,1,1)$ and $\chi_0(1,q,1)$ cannot be made modular their sum satisfies \begin{align}\label{add-formula}
\chi_0(q,1,1)+\chi_0(1,q,1)=\chi_0(q^{-1},q,1)
\end{align}
which is modular after 
multiplying with $q^{\frac{17}{80}}$. Relation (\ref{add-formula}) follows
from a slightly more general statement:
\begin{align} \label{x-var}
\sum_{i,j,k \geq 0} \frac{q^{i^2+j^2+k^2/2-ij-jk+j-i}x_1^i x_2^j x_3^k}{(q)_i (q)_j(q)_k}=
\sum_{i,j,k \geq 0} \frac{q^{i^2+j^2+k^2/2-ij-jk}x_1^i x_2^j x_3^k(x_1 q^i+q^j)}{(q)_i (q)_j(q)_k}.
\end{align}
Comparing the coefficients of  $x_1^i x_2^j x_3^k$ of both sides of (\ref{x-var}), we see that (\ref{x-var}) is equivalent to
\begin{align*}
\frac{q^{i^2+j^2+k^2/2-ij-jk+j-i}}{(q)_i (q)_j(q)_k}&=\frac{q^{i^2+j^2+k^2/2-ij-jk+j}}{(q)_i (q)_j(q)_k}+
\frac{q^{(i-1)^2+j^2+k^2/2-(i-1)j-jk+(i-1)}}{(q)_{i-1} (q)_j(q)_k}.
\end{align*}
The last formula is trivial to check.

\subsection{General conjecture}
In this part we discuss the general conjecture on the modularity of the rank $n$ Nahm sum $\chi_0({\bf 1})$ associated to $T_n$ as in the introduction, where for brevity we let ${\bf 1}:=(1,1,...,1)$.
It is possible to formulate a slightly stronger conjecture result analogous to Theorem \ref{main} but we omit discussing it here.

Let $f_1$,...,$f_\ell$ be any holomorphic functions in the upper half-plane. Denote by $D=\left(q \frac{d}{dq}\right)=\frac{1}{2 \pi i } \frac{\partial}{\partial \tau}$ Ramanujan's derivative and by $$\partial_k:=D-\frac{k}{12}E_2$$ where $E_2(q)=1-24\sum_{n \geq 1} \frac{nq^n}{1-q^n}$ is the second Eisenstein
series. This map is known to send the space of modular forms of weight $k$ (on some congruence subgroup) into the space of modular forms of weight $k+2$ for the same congruence subgroup. Denote by $\mathcal{W}_D(f_1,...,f_\ell)$ the Wronskian determinant with respect to the $D$-derivation which is again a holomorphic function. Additionally, denote by
$\mathcal{W}_{\partial_k}(f_1,...,f_\ell)$ the Wronskian with respect to the $\partial_k$ derivation, where the $r$-th derivative is defined as
$\partial^r_k:=\partial_{k+2r-2} \circ \cdots \circ \partial_k$. Suppose that each $f_i$ admits a $q$-expansion. Then $\mathcal{W}_D$ also has a $q$-expansion so we can denote by $\widetilde{\mathcal{W}}_D$ the Wronskian normalized such that the leading coefficient in the $q$-expansion is $1$. Then we have a known result (see for instance \cite{Bringmann, Milas}).
\begin{theorem} \label{wronski} Let $f_1$,\dots,$f_\ell$ be modular forms of weight $k$ with respect to a congruence subgroup, then the Wronskian
$$\mathcal{W}_D(f_1,...,f_\ell)=\mathcal{W}_{\partial_k}(f_1,...,f_\ell)$$
is a modular form of weight $\ell(\ell+k-1)$ on the same congruence subgroup.
\end{theorem}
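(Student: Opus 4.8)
The plan is to establish the two assertions of the theorem separately: first the identity $\mathcal{W}_D(f_1,\dots,f_\ell)=\mathcal{W}_{\partial_k}(f_1,\dots,f_\ell)$, and then the modularity of the common value, which I read off from the $\partial_k$-Wronskian. The reason for passing to $\partial_k$ is that $D$ by itself does not preserve modularity (the transformation of $Df$ carries an $E_2$-correction), whereas $\partial_k$ raises the weight by exactly $2$; the content of the first identity is precisely that these $E_2$-corrections cancel inside the determinant.

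For the first identity I would prove by induction on $r$ that
$$\partial_k^r f = D^r f + \sum_{s=0}^{r-1} a_{r,s}(\tau)\,D^s f,$$
where the coefficients $a_{r,s}(\tau)$ are polynomials in $E_2$ and its $D$-derivatives (depending on $k,r,s$) that do \emph{not} depend on $f$. The cases $r=0,1$ are immediate from $\partial_k=D-\tfrac{k}{12}E_2$, and the inductive step amounts to applying $\partial_{k+2r}=D-\tfrac{k+2r}{12}E_2$ to the expression for $\partial_k^r f$ and invoking the product rule for $D$: every term produced is a multiple of some $D^s f$ with $s\le r+1$, the top term $D^{r+1}f$ has coefficient $1$, and all coefficients remain independent of $f$. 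Writing $M_D$ and $M_{\partial_k}$ for the two Wronskian matrices, with rows indexed by the order of derivation $r=0,\dots,\ell-1$ and columns by $i=1,\dots,\ell$, the displayed relation says exactly that $M_{\partial_k}=L\,M_D$ for a single unit lower-triangular matrix $L=(a_{r,s})$ common to all columns. Since $\det L=1$, we conclude $\mathcal{W}_{\partial_k}=\det M_{\partial_k}=\det M_D=\mathcal{W}_D$.

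For modularity I would work with $\mathcal{W}_{\partial_k}$. By the quoted weight-raising property of $\partial_k$, each iterate $\partial_k^r f_i$ lies in $M_{k+2r}(G)$, so the $r$-th row of $M_{\partial_k}$ consists of modular forms of weight $k+2r$ on $G$. In the expansion of the determinant, every monomial is a product of one entry from each row, hence a modular form on $G$ of weight $\sum_{r=0}^{\ell-1}(k+2r)=\ell k+\ell(\ell-1)=\ell(\ell+k-1)$; since this weight is independent of the permutation and $M_{\ell(\ell+k-1)}(G)$ is a vector space (in particular containing only forms holomorphic at every cusp), the determinant $\mathcal{W}_{\partial_k}$, and therefore $\mathcal{W}_D$, belongs to $M_{\ell(\ell+k-1)}(G)$. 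Equivalently, under $\gamma\in G$ with bottom row $(c,d)$ the $r$-th row scales by $(c\tau+d)^{k+2r}$, so multilinearity of the determinant in its rows yields the automorphy factor $(c\tau+d)^{\ell(\ell+k-1)}$.

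I expect the only substantive step to be the first identity; the modularity half is then formal. The crux of that step is the bookkeeping in the induction, and specifically the claim that the coefficients $a_{r,s}$ are genuinely independent of the chosen $f$, so that $M_{\partial_k}=L\,M_D$ holds with one and the same unipotent $L$ for every column. This independence is exactly what makes the passage from $M_D$ to $M_{\partial_k}$ a determinant-preserving sequence of row operations, and hence it is the precise mechanism by which the non-modular $E_2$-contributions disappear from $\mathcal{W}_D$.
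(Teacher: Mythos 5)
Your proof is correct. The paper does not actually prove Theorem \ref{wronski} --- it records it as a known result with citations to \cite{Bringmann} and \cite{Milas} --- and your argument (the unipotent row transformation $M_{\partial_k}=L\,M_D$ with $L$ independent of the column, giving equality of determinants, followed by multilinearity in the rows to read off the automorphy factor $(c\tau+d)^{\ell(\ell+k-1)}$) is precisely the standard proof found in those references.
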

In \cite{Bringmann} a more precise information about the automorphy factor and congruence subgroups for the Wronskian modular form $\mathcal{W}_D$ is given.

Now we specialize Theorem \ref{wronski} to a situation where $V={\rm Span}(f_1,...,f_\ell)$ define an $\ell$-dimensional modular invariant space
under $\Gamma(1)$. Then we have the following result.
\begin{prop} \label{ODE} 
(1) Let $f_1$,..,$f_\ell$ be a basis of the modular invariant space $V$. If  
$${\rm ord} (\mathcal{W}_D(f_1,...,f_\ell),i \infty)=\lambda$$
then 
$${\mathcal{W}}_D(f_1,...,f_\ell)=\eta(\tau)^{24 \lambda} G(\tau),$$
where $G(\tau)$ is a nonzero holomorphic modular form of weight $\ell(\ell-1)-12 \lambda$ on $\Gamma(1)$. In particular, if $\lambda=\frac{\ell(\ell-1)}{12}$, 
then $$\widetilde{\mathcal{W}}_D(f_1,...,f_\ell)=\eta(\tau)^{2\ell(\ell-1)}.$$
(2) If $${\rm ord} (\mathcal{W}_D(f_1,...,f_\ell),i \infty)>\frac{\ell(\ell-1)}{12}$$
then the Wronskian is identically zero and $f_i$ are linearly dependent.
\end{prop}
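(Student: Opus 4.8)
The plan is to divide the Wronskian $\mathcal{W}_D:=\mathcal{W}_D(f_1,\dots,f_\ell)$ by a suitable power of $\eta(\tau)$ so as to cancel simultaneously its multiplier system and its vanishing at the unique cusp of $\Gamma(1)$, thereby reducing the statement to the classification of holomorphic modular forms of a prescribed weight on $\Gamma(1)$. First I would record the global structure of $\mathcal{W}_D$. Since $f_1,\dots,f_\ell$ span the $\Gamma(1)$-invariant space $V$, replacing this basis by another only multiplies $\mathcal{W}_D$ by a nonzero constant, so both its order at $i\infty$ and its transformation behaviour are basis-independent. Taking $k=0$ in Theorem \ref{wronski} (we are in the weight-zero setting) and using the $\Gamma(1)$-invariance of $V$, the function $\mathcal{W}_D$ is a holomorphic modular form of weight $\ell(\ell-1)$ on $\Gamma(1)$ carrying some multiplier character $\chi$. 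Because the abelianization $\Gamma(1)^{\mathrm{ab}}\cong \mathbb{Z}/12\mathbb{Z}$ is generated by the image of $T$, such a character is completely determined by the single twelfth root of unity $\chi(T)$.

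Next I would pin down $\chi(T)$ via the leading $q$-exponent. Passing to a basis in which $T$ acts diagonally---as furnished by Remark \ref{new-basis} in the case at hand---each component has the shape $f_i=q^{\mu_i}(\cdots)$ with $f_i(\tau+1)=e^{2\pi i\mu_i}f_i(\tau)$; since $D$ commutes with $\tau\mapsto\tau+1$, every column of the Wronskian matrix acquires the factor $e^{2\pi i\mu_i}$, whence
\[
\mathcal{W}_D(\tau+1)=e^{2\pi i\sum_i\mu_i}\,\mathcal{W}_D(\tau).
\]
Thus all exponents of the $q$-expansion of $\mathcal{W}_D$ lie in $\sum_i\mu_i+\mathbb{Z}$, so $\lambda:=\mathrm{ord}(\mathcal{W}_D,i\infty)\equiv\sum_i\mu_i\pmod 1$ and $\chi(T)=e^{2\pi i\lambda}$. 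In particular $\chi(T)^{12}=1$ forces $\lambda\in\tfrac1{12}\mathbb{Z}$.

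The crux is then a comparison of multipliers. The function $\eta(\tau)^{24\lambda}$ is holomorphic and nowhere vanishing on $\mathbb{H}$, has weight $12\lambda$ and order exactly $\lambda$ at $i\infty$, and (using $\eta(\tau+1)=e^{\pi i/12}\eta(\tau)$) carries a multiplier character $\psi$ of $\Gamma(1)$ with $\psi(T)=e^{2\pi i\lambda}$. Since $\chi$ and $\psi$ are both characters of $\Gamma(1)$ agreeing on the generator $T$, they are equal. Hence $G(\tau):=\mathcal{W}_D/\eta(\tau)^{24\lambda}$ is genuinely $\Gamma(1)$-invariant (trivial multiplier), holomorphic on $\mathbb{H}$ because the denominator never vanishes there, of weight $\ell(\ell-1)-12\lambda$, and satisfies $\mathrm{ord}(G,i\infty)=\lambda-\lambda=0$; it is therefore a nonzero holomorphic modular form of that weight on $\Gamma(1)$, which proves part (1). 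The ``in particular'' clause follows at once: when $\lambda=\ell(\ell-1)/12$ the weight of $G$ vanishes, so $G$ is a nonzero constant, and normalizing the leading coefficient of $\eta(\tau)^{2\ell(\ell-1)}=q^{\ell(\ell-1)/12}(1+O(q))$ to $1$ yields $\widetilde{\mathcal{W}}_D=\eta(\tau)^{2\ell(\ell-1)}$.

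For part (2) I would argue by contraposition. Were $f_1,\dots,f_\ell$ linearly independent they would form a basis of the invariant space they span, and part (1) would exhibit $\mathcal{W}_D=\eta(\tau)^{24\lambda}G$ with $G$ a nonzero holomorphic modular form on $\Gamma(1)$ of weight $\ell(\ell-1)-12\lambda<0$, which is impossible since no nonzero holomorphic modular form has negative weight. Hence the $f_i$ are linearly dependent, and as they are holomorphic on the connected domain $\mathbb{H}$ this is equivalent to the identical vanishing of their Wronskian. The principal obstacle throughout is the multiplier comparison: one must verify that the character of $\mathcal{W}_D$ is genuinely dictated by its leading exponent (forcing $\lambda\in\tfrac1{12}\mathbb{Z}$) and matches that of $\eta(\tau)^{24\lambda}$; the diagonal-$T$ computation above is what makes this transparent, whereas the holomorphy bookkeeping, the absence of negative-weight holomorphic forms on $\Gamma(1)$, and the analytic Wronskian criterion for linear dependence are all standard.
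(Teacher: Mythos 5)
Your argument is correct, and it is in fact more than the paper itself provides: for part (1) the paper simply cites Mason (Theorem 3.7), Milas, and Nagatomo, while for part (2) it gives exactly the one-line deduction you give (contraposition via $M_k(\Gamma(1))=0$ for $k<0$). What you have written out is essentially the standard proof underlying those citations --- the Wronskian of a basis of a $\Gamma(1)$-invariant space is a weight $\ell(\ell-1)$ form with multiplier the determinant character of the representation, a character of $\Gamma(1)$ is determined by its value on $T$ because $\Gamma(1)^{\mathrm{ab}}\cong\mathbb{Z}/12\mathbb{Z}$ is generated by the image of $T$, and matching that value against the multiplier of $\eta^{24\lambda}$ (which also forces $\lambda\in\tfrac{1}{12}\mathbb{Z}$) trivializes the character after division. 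The only point worth flagging is that your diagonalization of the $T$-action on $V$ quietly uses the standing hypothesis, stated just before Theorem \ref{wronski}, that each $f_i$ admits a genuine $q$-expansion (no logarithmic terms, i.e.\ $T$ acts semisimply); with that assumption in place, as it is for the application to the $\tilde F_i$, every step goes through, including the implicit observation that a basis of holomorphic functions has not-identically-zero Wronskian so that $\lambda$ is finite in part (1).
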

Part (1) of this proposition can be found in \cite[Theorem 3.7]{Mason}; see also \cite[Theorem 2.2 and Proposition 2.4]{Milas} and \cite[Theorem 1]{Nagatomo}. Part (2) follows from part (1) and the fact that $M_k(\Gamma(1))=0$ for $k<0$. See also \cite[Lemma 3.6]{Mason}.

\begin{rem} \label{rem-wronski} In the situation when $f_i(q)=q^{r_i}(a_0^{(i)}+ a_1^{(i)}q+ \cdots)$, $a_0^{(i)} \neq 0$ for every $i$ and $r_i \neq r_j$ for $i \neq j$, it is easy to see that the order of vanishing of $\mathcal{W}_D(f_1,...,f_\ell)$ is $\sum_{i=1}^\ell r_i$.
Using this fact and Remark \ref{new-basis} we immediately see that the order of vanishing of $\mathcal{W}_{D}(\tilde{F}_1,\tilde{F}_2,\tilde{F}_3,\tilde{F}_4,\tilde{F}_5,\tilde{F}_6)$ is $\frac32$, so in our case 
the Wronskian is not an $\eta$-power and instead we have
$$\widetilde{\mathcal{W}}_D(\tilde{F}_1,\tilde{F}_2,\tilde{F}_3,\tilde{F}_4,\tilde{F}_5,\tilde{F}_6)=(5892480)^{-1} \cdot \eta(\tau)^{36} (70 027513 E_4(\tau)^3 - 64135033 E_6(\tau)^2).$$
\end{rem}

Now we are ready to state a general conjecture which is based on some numerical evidence.
\begin{conj} Let $\chi_0({\bf 1})$ be the Nahm sum associated to $T_n$, $n \geq 2$. Then we have
\begin{enumerate}
\item For $n=2k \geq 2$ even:
$$q^{a_k} \chi_0({\bf 1})=\frac{\frak{f}(\tau)^{2k} \widetilde{\mathcal{W}}_D(R_{2k,1},....,R_{2k,k}) }{\eta(\tau)^{k(2k-1)}},$$
where  $a_k=-\frac{k(1+4k)}{48(1+k)}$ and $$R_{2k,i}(\tau)=\sum_{n \in \mathbb{Z}} (-1)^n q^{(k+1)(n-\frac{(2i-1)}{4(k+1)})^2}.$$
\item For $n=2k-1 \geq 3$ odd:
$$q^{a_k} \chi_0({\bf 1})= \frac{\frak{f}(\tau)^{2k-1} \widetilde{\mathcal{W}}_D((\partial \Theta)_{1,\frac{2k+1}{2}},....,(\partial \Theta)_{k-1,\frac{2k+1}{2}})}{\eta(\tau)^{(k-1)(2k-1)}},$$
where $a_k=\frac{-1+6k-8k^2}{96k+48}$.
\end{enumerate}
More precisely, $q^{a_k} \chi_0({\bf 1})$ is a modular function which is a component of a $3k$-dimensional vector valued modular function under $\Gamma(1)$.
\end{conj}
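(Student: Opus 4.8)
The plan is to lift the rank-three argument to general rank in two layers: first realize $q^{a_k}\chi_0(\mathbf{1})$ as one distinguished entry of an explicit $3k$-dimensional vector-valued modular function on $\Gamma(1)$, and then pin that vector down by a finite computation. The target representation is already in hand: it is exactly the $3k$-dimensional family built from $\frac{\mathfrak{f}(\tau)^3}{\eta(\tau)^3}(\partial\Theta)_{i,\frac{2k+1}{2}}$, $\frac{\mathfrak{f}_1(\tau)^3}{\eta(\tau)^3}(\partial G)_{i,\frac{2k+1}{2}}$ and $\frac{\mathfrak{f}_2(\tau)^3}{\eta(\tau)^3}(\partial\Theta)_{\frac{2i-1}{2},\frac{2k+1}{2}}$ ($1\le i\le k$) of the Proposition preceding Corollary \ref{gam}, whose closure under $\Gamma(1)$ rests on the theta transformation formulas \eqref{Theta-mod}--\eqref{theta-g-T-second}. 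So the first task is to enlarge $\chi_0(\mathbf{1})$ — as $F_1,\dots,F_6$ were built at rank three — to $3k$ shifted tadpole characters (by specializing the $x_i$ to half-integral powers of $q$ and applying $\tau\mapsto\tau+1$ to the half-integral-exponent sums) and to show that, after the correct $q$-power normalizations, these shifted sums carry precisely this representation.

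The combinatorially heaviest step is to evaluate the shifted characters in closed form, generalizing Theorem \ref{main2}. I would iterate the constant-term method of Section \ref{sec-identities}. The tadpole feature $a_{nn}=1$ is what drives the whole reduction: after replacing $q$ by $q^2$, the innermost sum over $n_n$ has exponent of the shape $q^{\binom{n_n}{2}}$ times a monomial, so Euler's identity \eqref{Euler} collapses it to an infinite product $(-x_n q^{1-2n_{n-1}};q^2)_\infty$. Expanding this product and peeling off one variable at a time through $\mathrm{CT}[\cdot]$ and the Jacobi triple product \eqref{Jacobi} reduces the $n$-fold sum to unary theta functions times a single residual sum, which at the last stage must be recognized as a member of a Rogers--Ramanujan/Andrews--Gordon family with a product evaluation, just as \eqref{S79}--\eqref{Rogers-1} served at rank three.

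With both sides exhibited as vector-valued modular functions for the same $\Gamma(1)$-representation, the identity is forced by the rigidity of Proposition \ref{ODE}. As in Remark \ref{new-basis}, one passes to a basis whose leading $q$-exponents are pairwise incongruent modulo $\mathbb{Z}$; the difference of the two candidate vectors is then again vector-valued, and by Remark \ref{rem-wronski} its Wronskian vanishes to order equal to the sum of the leading exponents. Once that order exceeds $\frac{3k(3k-1)}{12}$, Proposition \ref{ODE}(2) kills the Wronskian and hence the difference, so the whole statement reduces to matching finitely many leading Fourier coefficients (equivalently, one invokes the Sturm-type bound for vector-valued modular forms of \cite{MR}). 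The appearance of the \emph{Wronskian} $\widetilde{\mathcal{W}}_D(R_{2k,1},\dots)$ or $\widetilde{\mathcal{W}}_D((\partial\Theta)_{1,\frac{2k+1}{2}},\dots)$ on the product side is then explained by a weight count via Theorem \ref{wronski}: the Wronskian of the $k$ (resp.\ $k-1$) theta building blocks of weight $\frac12$ (resp.\ $\frac32$) has exactly the weight $\frac{k(2k-1)}{2}$ (resp.\ $\frac{(k-1)(2k-1)}{2}$) needed to cancel the denominator $\eta(\tau)^{k(2k-1)}$ (resp.\ $\eta(\tau)^{(k-1)(2k-1)}$) and render $q^{a_k}\chi_0(\mathbf{1})$ a genuine weight-zero modular function; among modular forms of that weight it is the unique one matching the correct leading exponent of the vacuum component.

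The main obstacle is the reduction step as $n$ grows. At rank three the double constant-term extraction lands precisely on Rogers' four identities \eqref{S79}--\eqref{Rogers-1}; for general $n$ the $(n-1)$-fold nested extraction produces single sums whose closed product evaluation is not guaranteed a priori, and organizing the correct infinite family of sum-product identities — together with the bookkeeping of which shift produces which theta-characteristic and therefore which column of the Wronskian — is the crux. A secondary difficulty is showing that the prefactor $\mathfrak{f}(\tau)^{n}$ supplies exactly the multiplier system needed to marry the half-integral-weight Wronskian to the $\eta$-power and to the Nahm sum, so that the equality holds as weight-zero modular functions rather than only up to an undetermined automorphy factor.
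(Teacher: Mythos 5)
The statement you are addressing is stated in the paper as a \emph{conjecture}: the authors offer no proof of it, only the remark that part (1) is known for $n=2$ and verified numerically for $n=4$, and that part (2) for $n=3$ is the content of Theorem \ref{main}. So there is no paper proof to compare against, and your text should be judged as a proposed proof on its own terms. As such, it is a plausible roadmap that faithfully extrapolates the paper's rank-three machinery (constant-term reduction, the $3k$-dimensional theta representation of the Proposition before Corollary \ref{gam}, and the Wronskian/Sturm rigidity of Proposition \ref{ODE}), and your weight count explaining why $\widetilde{\mathcal{W}}_D$ of $k$ weight-$\frac12$ (resp.\ $k-1$ weight-$\frac32$) theta blocks exactly cancels $\eta(\tau)^{k(2k-1)}$ (resp.\ $\eta(\tau)^{(k-1)(2k-1)}$) is correct. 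But it is not a proof, and you say so yourself: the step you call ``the crux'' is genuinely open.

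Concretely, the gap is twofold. First, the rigidity endgame (Proposition \ref{ODE}(2) or the Sturm-type bound of \cite{MR}) only applies once \emph{both} candidate vectors are known to transform under the same $\Gamma(1)$-representation; for the theta side this is established, but for the Nahm side it is exactly the assertion to be proved. At rank three the paper gets this from the explicit product evaluations of Theorem \ref{main2} (which identify each $\tilde F_i$ with $\mathfrak{f}_\bullet(W_aZ_b\pm W_cZ_d)$, whose modular behavior is known); without a general-rank analogue of those identities you have no handle on how $\chi_0(\mathbf{1})$ and its shifts transform, so the finite-coefficient check proves nothing. Second, the $(n-1)$-fold constant-term extraction does not obviously terminate in a recognizable sum-product family: the rank-three argument lands on precisely Rogers' identities \eqref{S79}--\eqref{Rogers-1} after a single Euler collapse and one Jacobi triple product insertion, and there is no argument given (nor an obvious candidate, e.g.\ a specific Andrews--Gordon specialization) for what replaces them when $n\ge 4$, nor for which $3k$ specializations of $x_1,\dots,x_n$ yield a closed system. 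You have correctly identified where the difficulty lies, but identifying it is not the same as overcoming it; the statement remains a conjecture after your argument just as it does in the paper.
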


Part (1) of the conjecture is known to hold for $n=2$  \cite{CMP} and we also verified $n=4$ numerically for high powers of $q$.
Part (2) for $n=3$ was proven in this paper.

\subsection*{Acknowledgements}
The second author was supported by the National Natural Science Foundation of China (12171375).

\end{document}